\documentclass[11pt]{article}
\usepackage{amsfonts,epsf,amsmath,amssymb}
\usepackage{epsfig}
\usepackage{latexsym}
\usepackage{pgf,tikz}
\usepackage{bm}
\usepackage{lineno}
\usepackage{enumitem}

\newtheorem{theorem}{\bf Theorem}[section]
\newtheorem{corollary}[theorem]{\bf Corollary}

\newtheorem{proposition}[theorem]{\bf Proposition}

\newtheorem{remark}[theorem]{\bf Remark}
\newtheorem{definition}[theorem]{\bf Definition}

\newcommand{\qed}{\hfill $\square$ \bigskip}

\newcommand{\Fib}{{\mathcal F}}

\newcommand{\supp}{{\rm supp}}

\newcommand{\diam}{{\rm diam}}
\newcommand{\rad}{{\rm rad}}
\newcommand{\ecc}{{\rm ecc}}

\newcommand{\M}{\mathcal{M}}
\newcommand{\B}{{\mathcal B}}

\newcommand{\A}{{\mathcal A}}

\newcommand{\st}{ ~|~ }

\def\cp{\,\square\,}

\begin{document}
%\modulolinenumbers[5]
%\linenumbers
\title{Munarini graphs: a generalization of Fibonacci cubes and Pell graphs. Part I}

\author{
Michel Mollard\footnote{Institut Fourier, CNRS, Universit\'e Grenoble Alpes, France email: michel.mollard@univ-grenoble-alpes.fr}
}
\date{\today}
\maketitle

\begin{abstract}
The Fibonacci cube $\Gamma_n$ is the subgraph of the hypercube $Q_n$ induced by vertices with no consecutive $1$s. Munarini introduced Pell graphs, a variation of Fibonacci cubes defined on ternary strings.  A generalization of  Pell graphs to $(k+1)$-ary strings has recently been proposed. In this paper we introduce Munarini graphs, which constitute an alternative  generalization of Fibonacci cubes and  Pell graphs. One of the main advantages of Munarini graphs is that, unlike previously proposed generalization, they are daisy cubes, as are Fibonacci cubes and  Pell graphs. In this first article, we  study some of their fundamental properties including the size, the recursive structure,  the cube  and  maximal cube polynomials.

\end{abstract}
\noindent
{\bf Keywords:} Hypercube, Fibonacci cube, Pell graphs, Daisy cubes. 

\noindent
{\bf AMS Subj. Class. }: 05C07,05C35

%%%%%%%%%%%%%%%%%%%%%%%%%%%%%%%%%%%%%%%%%%%%%%%%%%%%%%%%%%
\section{Introduction }

This work is dedicated to Emanuele Munarini, who passed away in 2024. Emanuele introduced Pell graphs~\cite{M-2019} and was also a co-author of the seminal paper on the structural and enumerative properties of Fibonacci cubes~\cite{MZ-2002}. It is therefore natural to name the generalization of these graphs introduced here after him.

Fibonacci strings are binary strings with no consecutive $1$s.    
The {\em Fibonacci cube} of dimension $n$, denoted $\Gamma_n$, is the subgraph of the hypercube $Q_n$  induced by Fibonacci strings of length $n$. 

Fibonacci cubes\ were introduced in 1993 by W.-J.~Hsu\ as a model for interconnection networks~\cite{H-1993a}. Since then, they have found numerous applications and have also proved to be of independent combinatorial interest. In 2013, a review article on Fibonacci cubes was written by 
Sandi Klav\v{z}ar~\cite{K-2013a} and recently a full book about them has been published~\cite{EKM-2023}.

The investigation of the properties of Fibonacci cubes has attracted many researchers and led to the development of a variety of interesting generalizations and variations, which are covered in an entire chapter of the book~\cite{EKM-2023}. Among these families of graphs are Lucas cubes, 
generalized Fibonacci cubes, Pell graphs, 
$k$-Fibonacci cubes, Fibonacci $p$-cubes and Fibonacci-run graphs.

The structure of Fibonacci cubes is  closely related to the Fibonacci numbers: $F_0 = 0$, $F_1=1$, $F_{n} = F_{n-1} + F_{n-2}$ for $n\geq 2$.
One of the generalizations of them is the $k$-Fibonacci sequence  defined for a positive integer $k$ as $F_{n,k} = k F_{n-1,k} + F_{n-2,k}$ for $n\geq 2$ with initial values $F_{0,k} = 0$ and $F_{1,k}=1$.
The  $1$-Fibonacci sequence is thus the Fibonacci sequence.
If $k=2$, we obtain another famous sequence, that of Pell numbers $P_{n}$, where $P_0 = 0$, $P_1=1$, $P_{n} = 2 P_{n-1} + P_{n-2}$ for $n\geq 2$.

Motivated by the Pell sequence, Emanuele Munarini~\cite{M-2019} introduced the Pell graph $\Pi_n$, defined on Pell strings, a subset of the set of strings of length $n$ over the alphabet \{0,1,2\}. The  structure of $\Pi_n$ reflects the recurrence $P_{n} = 2 P_{n-1} + P_{n-2}$.
Fibonacci cubes and Pell graphs~\cite[Theorem~9.68]{EKM-2023} belong to the class of daisy cubes~\cite{KM-2019a}, a family of isometric subgraphs of hypercubes.

Recently, inspired by the $k$-Fibonacci numbers for $k\geq2$, a generalization of Pell graphs, the generalized Pell graphs $\Pi _{n,k}$,  have been proposed by Vesna Ir\v{s}i\v{c}, Sandi Klav\v{z}ar and Elif Tan~\cite{IKT-2023}. Note that $\Pi _{n,k}$ is not a daisy cube for $k\geq3$ and $n\geq3$. 

Also inspired by $k$-Fibonacci numbers, Tomislav Do\v{s}li\v{c} and Luka Podrug~\cite{DP-2024} studied a second family of graphs, the metallic cubes $\Pi_{n}^{k}$, having the same order and size as $\Pi _{n,k}$. However, they do not constitute a  generalization of Pell graphs because $\Pi_{n}^{2}$ is not isomorphic to the Pell graph $\Pi_n$ for $n\geq 3$. Furthermore metallic cubes are not daisy cubes either.

In this article, we introduce a family of graphs that we call Munarini graphs $M_{n,k}$. These graphs have the same order and size as generalized Pell graphs, but, unlike them, they are daisy cubes. The inductive structure of Munarini graphs is another interpretation of the recurrence relation $F_{n,k} = k F_{n-1,k} + F_{n-2,k}$. Indeed, $M_{n,k}$ is constructed from $M_{n-2,k}$ and $k$ copies of $M_{n-1,k}$ connected along the star $S_{k-1}$, whereas in $\Pi_{n,k}$ and $\Pi_{n}^{k}$, the $k$ subgraphs of dimension $n-1$ form a path $P_{k}$ (see Figure~\ref{fig:Recursive}). Furthermore Munarini graphs generalize both Fibonacci cubes and Pell graphs since $M_{n,1}\cong\Gamma_{n-1}$ and $M_{n,2}\cong\Pi_{n}$. 

We will study the fundamental properties of Munarini graphs and show that several of their enumerative properties yield formulas very close  to those of Fibonacci cubes and Pell graphs.

This paper is organized as follows. In the next section we formally define the  concepts used in this paper and fix some notations. We then introduce Munarini graphs $M_{n,k}$ and determine their first properties, such as the number of edges and the recursive structure.  In Section~\ref{sec:sub} we study $M_{n,k}$ as subgraphs of hypercubes and Fibonacci cubes. In Section~\ref{sec:cub} we determine the cube polynomial and study hypercubes and maximal hypercubes in $M_{n,k}$.

	%%%%%%%%%%%%%%%%%%%%%%%%%%%%%%%%%%%%%%%%%%%%%%%%%%%%%%%%%%
%%%%%%%%%%%%%%%%%%%%%%%%%%%%%%%%%%%%%%%%%%%%%%%%%%%%%%%%%%
\section{Preliminaries}
\label{sec:basic}
%%%%%%%%%%%%%%%%%%%%%%%%%%%%%%%%%%%%%%%%%%%%%%%%%%%%%%%%%%
%%%%%%%%%%%%%%%%%%%%%%%%%%%%%%%%%%%%%%%%%%%%%%%%%%%%%%%%%%
We now introduce the concepts and notations needed in this paper.

%We denote by $[a,n]$ the set of integers $i$ such that $a\leq i \leq n$. 

Throughout this article, we use  the dotted union symbols $A\dot{\cup} B$ or $\dot{\bigcup}_{k\geq 0}{A_k}$ to enphasize that the sets are disjoint.

Let $k\geq 1$. A $(k+1)$-ary  string $s$ of length $l(s)$,  string for short,  is a sequence of $l(s)$ characters in $\{0,1,\ldots k\}$. Let $i\in\{0,1,\ldots k\}$, then $|s|_i$ is the number of occurrences of $i$ in $s$.
An alphabet is a set of symbols classically called letters; in our context, these letters may themselves be $(k+1)$-ary strings. For example $\Sigma=\{0,1,2\}$ or $\Sigma=\{0,1,22\}$ are alphabets over ternary strings. Let $\A$ be the set of words generated freely (as a monoid) by the alphabet $\Sigma$. This means that words $s \in \A$ can be written 
uniquely as a concatenation of zero or more letters from $\Sigma$. For example, for $\Sigma=\{0,1,22\}$, the string $0221$ belongs to $\A$, but $02221$ does not. Note that the empty string $\lambda$ always belongs to $\A$.
%Alternatively we say that the elements of $\A$ are strings over the alphabet $\Sigma$. If $s \in \A$ and  $a\in\Sigma$ then  $|s|_a$ is the number of use of $a$ in the word $s$ as an element of the monoid. For example for $\Sigma=\{0,1,22\}$ the string $0221$ belongs to  $\A$ but not  $02221$, $l(022221)=6$ and   $|022221|_{22}=2$.

Let $s$ be a $(k+1)$-ary  string and  $a\in\{0,\ldots,k\}$, then a \emph{run} of $a$s in $s$ is a maximal  substring of $s$ consisting entirely of the character $a$.
 
For a set of strings $\B$ and a string $s$ we denote by $s\B$ (respectively $\B s$) the sets of strings obtained by concatenation of $s$ with a string in $\B$ (respectively a string in $\B$ with $s$).

Let us consider the special case of binary strings so $k=1$, and let $B=\{0,1\}$. It will be convenient to identify elements $u = (u_1,\ldots, u_n)\in B^n$ and strings of length $n$ over $B$. We thus  briefly write $u$ as $u_1\ldots u_n$ and call $u_i$ the $i$-th coordinate of $u$. For $j\in\{1,\ldots,n\}$ the string  $u+\delta_j$ is defined as the string $v$ such that $v_j\neq u_j$ and $v_i=u_i$ for all $i\neq j$.
We use the power notation for concatenation of bits, for instance $0^n = 0\ldots 0\in B^n$. 

The Hamming distance between two strings in $B^n$ is the number of coordinates the strings differ.

The vertex set of $Q_n$, the \emph{hypercube of dimension $n$},  is the set $B^n$, two vertices being adjacent if and only if they differ in precisely one coordinate. We  say that an edge $uv$ of $Q_n$ uses the direction $i$ if $u$ and $v$ differ in the coordinate $i$, thus if $v=u+\delta_i$. 

The {\em Cartesian product}\ $G \cp H$ of graphs $G$ and $H$ has the vertex set $V(G) \times V(H)$, and $(g,h)$ is adjacent to $(g',h')$ if either $g = g'$ and $hh' \in E(H)$, or $gg' \in E(G)$ and  $h = h'$. Therefore, for $n\geq 1$,  $Q_n$ can be represented as the Cartesian product graph as follows: 
$$ Q_n=Q_{n-1}\cp K_2=Q_{n-1}\cp Q_1\,.$$

The \emph{star graph} on $n$ vertices $S_{n-1}$ is the complete graph $K_{1,n-1}$ that is, the  graph obtained by connecting a new vertex to  $n-1$ isolated vertices.

The {\em distance} $d_G(u,v)$\index{notations} between two vertices $u$ and $v$ of a connected graph $G$ is the 
number of edges on a shortest $u,v$-path. It is immediate that the distance between two vertices of $Q_n$ is the Hamming distance.
The {\em eccentricity} $\ecc_G(u)$  of $u\in V(G)$ is 
the maximum distance between $u$ and any other vertex of $G$. We may simply write 
$d(u,v)$ and $\ecc (u)$ when $G$ is clear from the context. 
The {\em diameter}\index{subject}  $\diam(G)$\index{notations} of a connected graph $G$ is the maximum distance between pairs  of vertices of $G$ or equivalently the maximum eccentricity $\ecc (x)$ among vertices $x$ of $G$. 
The {\em radius} $\rad(G) $ is the minimum  eccentricity. We note $\Delta(G)$ and $\delta(G)$ the maximum degree and minimum degree of $G$.

If $u$ and $v$ are vertices of a graph $G$, the \emph{interval} $I_G(u,v)$ between $u$ and $v$ (in $G$) is the set of vertices lying on shortest $u,v$-path, that is, $I_G(u,v) = \{w\st d(u,v) = d(u,w) + d(w,v)\}$. We also write $I(u,v)$ when $G$ is clear from the context.

Let $u$, $v$, and $w$ be different vertices of a connected graph $G$. 
A {\em median} of the triple $u,v,w$ 
is a vertex $x$ that simultaneously lies on a shortest $u,v$-path, a shortest $u,w$-path, and a shortest $v,w$-path.
The graph $G$ is a {\em median graph} if every 
triple of its vertices has a unique median. 

Hypercubes are median graphs and the unique median $m$ of $u,v,w$ is obtained  by the {\em majority rule}: the $i$th-coordinate of $m$ is the bit that appears at least twice among the $i$th-coordinates of  $u,v,w$.

A subgraph $G$ of a graph $H$ is {\em isometric} if $d_G(u,v) = d_H(u,v)$ holds for any $u,v\in V(G)$. 

Graphs isomorphic to isometric subgraphs of hypercubes are  called \emph{partial cubes}. In other words, a partial cube $G$ is a graph whose vertices can be labeled, for some $n$, with strings from $B^n$ such that the distance between any two vertices in $G$ is equal to the Hamming distance between their labels. Such labeling is called an isometric embedding in $Q_n$.
 
The {\em dimension} of a partial cube $G$ is the smallest integer
$n$ such that $G$ is an isometric subgraph of $Q_n$.
 
Many important classes of graphs are partial cubes, 
in particular trees, median graphs~\cite{M-1980}, benzenoid graphs, phenylenes, 
grid graphs and bipartite torus graphs.

If $G$ is a graph and $X\subseteq V(G)$, then $\langle X\rangle$ denotes the subgraph of $G$ induced by $X$.  
Let $\le$ be a partial order on $B^n$ defined by $u_1\ldots u_n \le v_1\ldots v_n$ if $u_i\le v_i$ holds for all $i\in \{1,\ldots,n\}$. For $X \subseteq B^n$ we define the {\em daisy cube generated by $X$}~\cite{KM-2019a} as the subgraph of $Q_n$  
$$Q_n(X) =\left\langle \{u\in B^n\st u\le x\ {\rm for\ some}\ x\in X \} \right\rangle\,.$$
%For $X \subseteq B^n$ we define the graph  $Q_n(X)$ as the subgraph of $Q_n$ with  
%$$Q_n(X) = \left\langle \{u\in B^n| u\le x\ {\rm for\ some}\ x\in X \} \right\rangle$$
%and say that $Q_n(X)$ is the {\em daisy cube generated by $X$}.
 Note that  if $\widehat{X}$ is the antichain consisting of the maximal elements of the poset $(X,\le)$, then $Q_n(\widehat{X}) = Q_n(X)$. We call the vertices of $Q_n(X)$ from  $\widehat{X}$ the {\em maximal vertices} of $Q_n(X)$. As noticed in the daisy cube introductory paper \cite{KM-2019a} we can alternatively say that $$Q_n(X) = \left\langle \bigcup_{x\in X} I_{Q_n}(x,0^n)\right\rangle=\left\langle \bigcup_{x\in \widehat{X}} I_{Q_n}(x,0^n)\right\rangle.$$

 Finally we will say that a graph $G$ is {\em a daisy cube} if there exists an isometric embedding  of $G$ in some hypercube $Q_n$ and a subset $X$ of $B^n$ such that $G$ is the daisy cube generated by $X$. Such an embedding will be called a {\em proper embedding}.

Daisy cubes are partial cubes~\cite{KM-2019a}. It is immediate that the class of daisy cubes is closed under the Cartesian product.

The following result is well-known (see~\cite{HIK-2011} for example).

\begin{proposition}\label{pro:bt}
In every induced subgraph $H$ of $Q_n$ isomorphic to $Q_k$ there exists a unique vertex of minimal Hamming weight, \emph{the bottom vertex} $b(H)$. There exists also a unique vertex of maximal Hamming weight, the \emph{top vertex} $t(H)$. 
Furthermore $b(H)$ and $t(H)$ are at distance $k$ and they characterize $H$ among the subgraphs of $Q_n$ isomorphic to $Q_k$. Note that $b(H) \le t(H)$. 
\end{proposition}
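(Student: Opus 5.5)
The plan is to first show that an induced subgraph $H$ of $Q_n$ isomorphic to $Q_k$ is a subcube: there is a set $D$ of $k$ directions and a fixed assignment of the other $n-k$ coordinates such that $V(H)$ consists exactly of the $2^k$ strings agreeing with that assignment outside $D$. Everything else then follows by reading off coordinates.

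To obtain the subcube structure, I would use the standard fact that in a hypercube, the two pairs of opposite edges of any $4$-cycle use the same directions. Hence for each vertex $u$ of $H$, the $k$ edges of $H$ at $u$ use $k$ distinct directions. Take a fixed vertex $u$ with direction set $D$. I would show by induction along paths in $H$ that every vertex of $H$ has the same direction set $D$. The key observation is that in $Q_k$ any two edges at a vertex lie in a unique $4$-cycle. So if $uv$ uses direction $i$ and $uw$ uses direction $j$, then the $4$-cycle of $H$ through them closes at $u+\delta_i+\delta_j$. This forces the edge at $v$ in that $4$-cycle to use direction $j$. Consequently the $k$ directions at $v$ are $i$ together with $D\setminus\{i\}$, that is, $D$ again.

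It follows that $V(H)\subseteq u+\langle \delta_i : i\in D\rangle$, since $H$ is connected and every edge uses a direction in $D$. The two sets have the same cardinality $2^k$, so they are equal. This is the step I expect to be the main obstacle. It needs some care to argue that the relevant $4$-cycles of the abstract $Q_k$ map to $4$-cycles of $Q_n$ with the stated direction pattern, but this is standard: two distinct neighbours of $v$ in $Q_n$ have at most two common neighbours, and the stated direction pattern follows from this.

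Once $V(H)=\{x : x_j=c_j \text{ for } j\notin D\}$ is established, the bottom vertex $b(H)$ is defined by $b_j=c_j$ for $j\notin D$ and $b_j=0$ for $j\in D$. The top vertex $t(H)$ agrees with $b(H)$ outside $D$ and has $t_j=1$ on $D$. Every other vertex of $H$ has strictly larger, respectively strictly smaller, weight, which gives uniqueness. Moreover $b(H)$ and $t(H)$ differ exactly on $D$, so they are at distance $k$, and clearly $b(H)\le t(H)$. Conversely, $b(H)$ and $t(H)$ determine $D$ as the set of coordinates where they differ, and they determine $c$ as their common values elsewhere. Hence they determine $V(H)$, and since $H$ is induced, they determine $H$ itself.
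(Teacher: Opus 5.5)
Your proof is correct, and it fills a gap in the paper rather than competing with an argument there: the paper does not prove this proposition, it only quotes it as well known and cites the literature.

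The heart of your argument is showing that $H$ is a subcube. You propagate the direction set $D$ across the $4$-cycles of $H$: the fourth vertex of the $4$-cycle through $u+\delta_i$ and $u+\delta_j$ must be $u+\delta_i+\delta_j$, because these two vertices have only $u$ and $u+\delta_i+\delta_j$ as common neighbours in $Q_n$. You then combine this with $k$-regularity and the count $2^k$. This is the same content the literature usually packages through the Djokovi\'c--Winkler relation. There, each of the $k$ $\Theta$-classes of $Q_k$ is sent to a single coordinate direction of $Q_n$, and distinct classes go to distinct directions.

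Your version avoids that machinery and gives a small bonus. It never uses that $H$ is induced, so it shows that any subgraph of $Q_n$ isomorphic to $Q_k$ has a subcube as vertex set. Comparing edge counts ($k2^{k-1}$ on both sides), such a subgraph is then automatically induced. This makes the phrase ``characterize $H$ among the subgraphs of $Q_n$ isomorphic to $Q_k$'' unambiguous.

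One cosmetic point: your ``hence'' is misattributed. The fact that the $k$ edges of $H$ at $u$ use $k$ distinct directions does not follow from the $4$-cycle property. It holds simply because distinct neighbours of $u$ in $Q_n$ differ from $u$ in distinct coordinates.
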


Let $G$ be a partial cube. If $H$ is an induced subgraph of $G$, it is also an induced subgraph of some $Q_n$. Thus Proposition~\ref{pro:bt} is still true for induced subgraphs of partial cubes.

The {\em support} of an induced hypercube $H$ of a partial cube  $G$, $\supp(H)$, is the set of coordinates that vary in $H$. Therefore, 
$$\supp(H) = \{i\in \{1,\ldots,n\}\st t_i=1, b_i=0\}\,.$$ 
$H$ is thus characterized by the couple $(t(H),\supp(H))$ or equivalently by the couple $(t(H),b(H))$. 
 \begin{remark}\label{re:daisy}
Let $G$ be a partial cube of dimension $n$. For $t\in V(G)$ and $b\leq t$  the hypercube of $Q_n$ defined by the couple $(t,b)$ is not necessarily a subgraph of $G$. However, if $G$ is a daisy cube, then this hypercube will be a subgraph of $G$ because $I_{Q_n}(b,t)\subset I_{Q_n}(0^n,t)\subset V(G)$. 
 \end{remark}

Let $(F_n)_{n\geq0}$ be the \emph{Fibonacci numbers}:
$F_0 = 0$, $F_1=1$, $F_{n} = F_{n-1} + F_{n-2}$ for $n\geq 2$.
A {\em Fibonacci string}  is a binary string without consecutive 1s. Let ${\cal F} $ be the set of Fibonacci strings and ${\cal F}_n$ those of length $n$.  Note that  ${\cal F} 0$  is the set of strings generated by the alphabet $\{0, 10\}$ and that ${\cal F}_{n-1} 0$ is the set of Fibonacci strings of length $n$ ending with $0$.

The {\em Fibonacci cube} $\Gamma_n$ ($n\geq 1$) is the subgraph of $Q_n$ induced by ${\cal F}_n$. Because of the empty string $\lambda$, $\Gamma_0 = K_1$. 

Since ${\cal F}_{n+2}=0{\cal F}_{n+1}\dot{\cup} 10{\cal F}_{n}$, ${\cal F}_{0}=\{\lambda\}$ and ${\cal F}_{1}=\{0,1\}$, by induction $|{\cal F}_{n}|=|V(\Gamma_{n})|=F_{n+2}$.  

The {\em Pell numbers} $(P_n)_{n\geq0}$ is another well-known sequence of integers defined by $P_0 = 0$, $P_1=1$, $P_{n} =  2 P_{n-1} + P_{n-2}$ for $n\geq 2$.

Motivated by this sequence, Emanuele Munarini~\cite{M-2019} introduced  Pell strings and  Pell graphs. 

A \emph{Pell string} is a  string over the alphabet $\{0,1,2\}$ without runs of 2s of odd length. Equivalently  Pell strings are elements of the monoid generated by $\{0,1,22\}$. Let ${\mathcal P}_n$ be the set of Pell strings of length $n$. In particular ${\mathcal P}_0=\{\lambda\}$ and ${\mathcal P}_1=\{0,1\}$. It is obvious by induction that the number of Pell strings of length $n$ is  $P_{n+1}$. 

For $n\geq 0$ Munarini defined the \emph{Pell graph} $\Pi_n$ as follows. Let $V(\Pi_n)={\mathcal P}_n$. Two vertices of $\Pi_n$ are adjacent if either one of them can be obtained from the other by replacing a 0 with a 1 (or vice versa), or by replacing a substring 11 with 22 (or vice versa) in such a way that the new string is again a Pell string. We have $\Pi_0=K_1$, $\Pi_1=K_2$.

The {\em $k$-Fibonacci sequence} which generalizes both Fibonacci and Pell numbers is defined, for a positive integer $k$, by $F_{0,k} = 0$, $F_{1,k}=1$, $F_{n,k} = kF_{n-1,k} + F_{n-2,k}$ for $n\geq 2$~\cite{FP-2007}.

From this definition, by the usual method, the generating function of the sequence $(F_{n,k})_{n\geq 0}$ is 
$$\sum_{n\geq 0}F_{n,k}t^n=\frac{t}{1-kt-t^2}.$$

Let $k\geq1$, then a \emph{$k$-generalized Pell string}, or generalized Pell string for short~\cite{IKT-2023}, is a string over the alphabet $\{0,1,\ldots, k\}$ without runs of $k$s of odd length. Equivalently  $k$-generalized Pell strings are the elements of the monoid generated by $\{0,1,\ldots, k-1,kk \}$. Let ${\cal F}_{n,k}$ be the set of $k$-generalized Pell strings of length $n$ .

Since ${\cal F}_{0,k}=\{\lambda\}$, ${\cal F}_{1,k}=\{0,1,\ldots,k-1\}$ and ${\cal F}_{n+2,k}= \dot{\bigcup}_{i= 0}^{k-1}{i{\cal F}_{n+1,k}}\dot{\cup} kk{\cal F}_{n,k}$  the number of strings in ${\cal F}_{n,k}$ is  $F_{n+1,k}$. 

Note that for $k=2$ the generalized Pell strings are the Pell strings.

The original definition in \cite{IKT-2023} assumes $k\geq2$, but the case $k = 1$ can also be considered. Since the monoid generated by $\{0, 11\}$ is isomorphic to that generated by $\{0, 10\}$, ${\cal F}_{n,1}$ is in bijection with the set of Fibonacci strings of length $n$ ending with a 0, itself in bijection with ${\cal F}_{n-1}$ for $n\geq1$. The bijection $\theta$ between ${\cal F}_{n,1}$ and ${\cal F}_{n-1}$ is therefore obtained by reading the strings  in ${\cal F}_{n,1}$ from left to right, keeping the 0s, changing the  $11$ to $10$, and then removing the final $0$ in the resulting string.

For $k\geq 2$ \emph{generalized Pell graphs} $\Pi_{n,k}$ are  defined as follows~\cite{IKT-2023}. The vertex set of $\Pi_{n,k}$ is ${\cal F}_{n,k}$. Two vertices of $\Pi_n$ are adjacent if either one of them can be obtained from the other by replacing a $i$ with a $i+1$ (or vice versa) where $i\in\{0,1,\ldots,k-2\}$ or by replacing a substring $(k-1)(k-1)$ with $kk$ (or vice versa) in such a way that the new string is again a generalized Pell string. Note that $\Pi_{n,2}=\Pi_n$, $\Pi_{0,k}=K_1$ and $\Pi_{1,k}$ is the path on $k$ vertices. The generalized Pell graph $\Pi_{2,3}$ is shown in Figure~\ref{fig:Pi23M23} (left) and the recursive structure of $\Pi_{n,k}$ in Figure~\ref{fig:Recursive} (left).

\begin{figure}[ht!]
\begin{center}
\begin{tikzpicture}[scale=0.5,style=thick]
\tikzstyle{every node}=[draw=none,fill=none]
\def\vr{3pt} % \vr = vertex radius;  Set \vr = 2/scale for uniform sizing of vertices

\begin{scope}[yshift = 0cm, xshift = 0cm]
\path (0,0) coordinate (v00);
\path (2.5,0) coordinate (v01);
\path (5,0) coordinate (v02);
\path (0,2.5) coordinate (v10);
\path (2.5,2.5) coordinate (v11);
\path (5,2.5) coordinate (v12);
\path (0,5) coordinate (v20);
\path (2.5,5) coordinate (v21);
\path (5,5) coordinate (v22);
\path (7.5,5) coordinate (v33);

%% edges %%
\draw (v00) -- (v10) -- (v20);
\draw (v01) -- (v11) -- (v21);
\draw (v02) -- (v12) -- (v22);

\draw (v00) -- (v01) -- (v02);\draw (v10) -- (v11) -- (v12);\draw (v20) -- (v21) -- (v22);
\draw (v22) -- (v33);

% 		\draw (x2) .. controls (2,2) and (2.5,1.5) .. (v2);
%% vertices %%%
\draw (v00)  [fill=white] circle (\vr);
\draw (v10)  [fill=white] circle (\vr);
\draw (v20)  [fill=white] circle (\vr);
\draw (v01)  [fill=white] circle (\vr);
\draw (v11)  [fill=white] circle (\vr);
\draw (v21)  [fill=white] circle (\vr);
\draw (v02)  [fill=white] circle (\vr);
\draw (v12)  [fill=white] circle (\vr);
\draw (v22)  [fill=white] circle (\vr);
\draw (v33)  [fill=white] circle (\vr);

\draw[right] (v00)++(-0.2,-0.5) node { $00$};\draw[right] (v10)++(-0.2,-0.5) node { $01$};
\draw[right] (v20)++(-0.2,-0.5) node {$02$};\draw[right] (v01)++(-0.2,-0.5) node { $10$};
\draw[right] (v11)++(-0.2,-0.5) node { $11$};\draw[right] (v21)++(-0.2,-0.5) node {$12$};
\draw[right] (v02)++(-0.2,-0.5) node { $20$};\draw[right] (v12)++(-0.2,-0.5) node { $21$};
\draw[right] (v22)++(-0.2,-0.5) node {$22$};\draw[right] (v33)++(-0.2,-0.5) node {$33$};

\end{scope}

\begin{scope}[yshift = 0 cm, xshift = 12cm]
%% vertices defined %%
\path (0,0) coordinate (v00);
\path (2.5,0) coordinate (v01);
\path (5,0) coordinate (v02);
\path (0,2.5) coordinate (v10);
\path (2.5,2.5) coordinate (v11);
\path (5,2.5) coordinate (v12);
\path (0,5) coordinate (v20);
\path (2.5,5) coordinate (v21);
\path (5,5) coordinate (v22);
\path (3.8,3.8) coordinate (v33);

%% edges %%
\draw (v00) -- (v10) -- (v20);
\draw (v01) -- (v11) -- (v21);
\draw (v02) -- (v12) -- (v22);

\draw (v00) -- (v01) -- (v02);\draw (v10) -- (v11) -- (v12);\draw (v20) -- (v21) -- (v22);
\draw (v11) -- (v33);

% 		\draw (x2) .. controls (2,2) and (2.5,1.5) .. (v2);
%% vertices %%%
\draw (v00)  [fill=white] circle (\vr);
\draw (v10)  [fill=white] circle (\vr);
\draw (v20)  [fill=white] circle (\vr);
\draw (v01)  [fill=white] circle (\vr);
\draw (v11)  [fill=white] circle (\vr);
\draw (v21)  [fill=white] circle (\vr);
\draw (v02)  [fill=white] circle (\vr);
\draw (v12)  [fill=white] circle (\vr);
\draw (v22)  [fill=white] circle (\vr);
\draw (v33)  [fill=white] circle (\vr);

\draw[right] (v00)++(-0.2,-0.5) node { $11$};\draw[right] (v10)++(-0.2,-0.5) node { $10$};
\draw[right] (v20)++(-0.2,-0.5) node {$12$};\draw[right] (v01)++(-0.2,-0.5) node { $01$};
\draw[right] (v11)++(-0.2,-0.5) node { $00$};\draw[right] (v21)++(-0.2,-0.5) node {$02$};
\draw[right] (v02)++(-0.2,-0.5) node { $21$};\draw[right] (v12)++(-0.2,-0.5) node { $20$};
\draw[right] (v22)++(-0.2,-0.5) node {$22$};\draw[right] (v33)++(-0.2,-0.5) node {$33$};

\end{scope}

\end{tikzpicture}
\end{center}
\caption{$\Pi_{2,3}\cong\Pi_2^3$(left) and $M_{2,3}$(right). \label{fig:Pi23M23}}
\end{figure}

\begin{figure}[ht]
\begin{center}
	\begin{tikzpicture}[scale=0.8,style=thick]
\tikzstyle{every node}=[draw=none,fill=none]
\def\vr{2pt} % \vr = vertex radius;  Set \vr = 2/scale for uniform 
\begin{scope}[yshift = 0cm, xshift = -3cm]

\foreach \i in {1,2,...,5}
{\path (0.1+0.5*\i,0.95) coordinate (u\i);}
\foreach \i in {1,2,...,5}
{\path (0.1+0.5*\i,1.35) coordinate (v\i);}
\foreach \i in {1,2,...,7}
{\path (0.1+0.5*\i,2.6) coordinate (a\i);}
\foreach \i in {1,2,...,7}
{\path (0.1+0.5*\i,2.9) coordinate (b\i);}
\foreach \i in {1,2,...,7}
{\path (0.1+0.5*\i,4.1) coordinate (c\i);}
\foreach \i in {1,2,...,7}
{\path (0.1+0.5*\i,4.65) coordinate (d\i);}
\foreach \i in {1,2,...,7}
{\path (0.1+0.5*\i,5.85) coordinate (e\i);}
\foreach \i in {1,2,...,7}
{\path (0.1+0.5*\i,6.4) coordinate (f\i);}
\draw (5,1.95) node {\footnotesize$k\hspace{-4pt}-\hspace{-4pt}1\Pi_{n-1,k}$};
\draw (5,3.45) node {\footnotesize$k\hspace{-4pt}-\hspace{-4pt}2\Pi_{n-1,k}$};
\draw (5,5.20) node {\footnotesize$i\Pi_{n-1,k}$};

%\draw (5,6.15) node {\footnotesize$1\Pi_{n-1,k}$};
\draw (5,6.95) node {\footnotesize$0\Pi_{n-1,k}$};
\draw (1.6,1.95) node {\footnotesize$k\hspace{-4pt}-\hspace{-4pt}1k\hspace{-4pt}-\hspace{-4pt}1\Pi_{n-2,k}$};
\draw (1.6,0.45) node {\footnotesize$kk\Pi_{n-2,k}$};
\foreach \i in {1,2,...,5}
{\draw (u\i) -- (v\i);}
\foreach \i in {1,2,...,7}
{\draw (a\i) -- (b\i);}
\foreach \i in {1,2,...,7}
{\draw[loosely dashed] (c\i) -- (d\i);}
\foreach \i in {1,2,...,7}
{\draw[loosely dashed] (e\i) -- (f\i);}

\foreach \i in {1,2}
{\draw (5,3.55+0.5*\i)  [fill=black] circle (\vr);}
\foreach \i in {1,2}
{\draw (5,5.30+0.5*\i)  [fill=black] circle (\vr);}

%\draw[rounded corners=12pt] (0,0) rectangle ++(5,1);
\draw[rounded corners=12pt] (0,1.5) rectangle ++(4,1);
\draw[rounded corners=12pt] (0,4.75) rectangle ++(4,1);
\draw[rounded corners=12pt] (0,3) rectangle ++(4,1);
\draw[rounded corners=12pt] (0,6.5) rectangle ++(4,1);
\draw[rounded corners=6pt] (0.2,0.2) rectangle ++(2.7,0.6);
\draw[rounded corners=6pt] (0.2,1.7) rectangle ++(2.7,0.6);
\end{scope}

\begin{scope}[yshift = 2.3cm, xshift = 9cm, scale=0.8]
\foreach \i in {1,2,...,4}
{\path (0.2+0.5*\i,-0.7) coordinate (u\i);}
\foreach \i in {1,2,...,4}
{\path (0.2+0.5*\i,0.2) coordinate (v\i);}

\foreach \i in {1,2,...,7}
{\path (-5.8+0.5*\i,3.8) coordinate (a\i);}
\foreach \i in {1,2,...,7}
{\path (-3.3+0.5*\i,2.3) coordinate (e\i);}

\foreach \i in {1,2,...,7}
{\path (3.3+0.5*\i,2.3) coordinate (f\i);}
\foreach \i in {1,2,...,7}
{\path (5.8+0.5*\i,3.8) coordinate (c\i);}

\foreach \i in {1,2,...,7}
{\path (0+0.5*\i,2.3) coordinate (b\i);}
\foreach \i in {1,2,...,7}
{\path (0+0.5*\i,3.8) coordinate (d\i);}

\foreach \i in {1,2,...,7}
{\draw (b\i) -- (d\i);}
\foreach \i in {1,2,...,7}
{\draw (a\i) -- (e\i);}
\foreach \i in {1,2,...,7}
{\draw (c\i) -- (f\i);}

\foreach \i in {1,2,...,5}
{\draw (u\i) -- (v\i);}
\draw (5.5,0.95) node {\footnotesize$0M_{n-1,k}$};
\draw (1.4,0.95) node {\footnotesize$00M_{n-2,k}$};
\draw (1.4,-1.55) node {\footnotesize$kkM_{n-2,k}$};
\draw (-4.1,5.8) node {\footnotesize$1M_{n-1,k}$};
\draw (8.5,5.8) node {\footnotesize $k\hspace{-4pt}-\hspace{-4pt}1M_{n-1,k}$};
\draw (2.0,5.8) node {\footnotesize$iM_{n-1,k}$};

\foreach \i in {1,2}
{\draw (-1.8+0.5*\i,5)  [fill=black] circle (\vr);}
\foreach \i in {1,2}
{\draw (4.4+0.5*\i,5)  [fill=black] circle (\vr);}

\draw[rounded corners=8pt] (0,0.5) rectangle ++(4,1);
\draw[rounded corners=8pt] (-6.2,4.5) rectangle ++(4,1);
\draw[rounded corners=8pt] (6.2,4.5) rectangle ++(4,1);
\draw[rounded corners=8pt] (0,4.5) rectangle ++(4,1);
\draw[rounded corners=5pt] (0.2,-1.8) rectangle ++(2.4,0.6);
\draw[rounded corners=5pt] (0.2,0.7) rectangle ++(2.4,0.6);
\end{scope}

\end{tikzpicture}
\end{center}
	\caption{Recursive decompositions of $\Pi_{n,k}$(left) and $M_{n,k}$(right).
	\label{fig:Recursive}}
\end{figure}

We can introduce now Munarini graphs.

\begin{definition}
For $k\geq 1$ and $n\geq0$ the \emph{Munarini graph} $M_{n,k}$ is the simple graph with vertex set ${\cal F}_{n,k}$ and edges defined as follows. Two vertices of $M_{n,k}$ are adjacent if either one of them can be obtained from the other by replacing a $0$ with a $i$ (or vice versa) where $i\in\{1,\ldots,k-1\}$ or by replacing a substring $00$ with $kk$ (or vice versa) in such a way that the new string is again a generalized Pell string. 
\end{definition}

\begin{figure}[ht!]
\begin{center}
\begin{tikzpicture}[scale=0.5,style=thick]
\tikzstyle{every node}=[draw=none,fill=none]
\def\vr{3pt} % \vr = vertex radius;  Set \vr = 2/scale for uniform sizing of vertices
\def\dv{1.5}
\def\dh{6}
\def\dz{0.5}

\begin{scope}[yshift = 2.5cm, xshift = 0cm]
\path (0,0) coordinate (v);
\draw (v)  [fill=white] circle (\vr);
\draw[right] (v)++(-0.2,-0.5) node {$\lambda$};
\end{scope}

\begin{scope}[yshift = 0cm, xshift = 6cm]
%% vertices defined %%
\path (0,0) coordinate (v1);
\path (0,2.5) coordinate (v0);
\path (0,5) coordinate (v2);

\draw (v1) -- (v0) -- (v2);

\draw (v1)  [fill=white] circle (\vr);
\draw (v0)  [fill=white] circle (\vr);
\draw (v2)  [fill=white] circle (\vr);

\draw[right] (v1)++(-0.2,-0.5) node { $1$};
\draw[right] (v0)++(-0.2,-0.5) node { $0$};\draw[right] (v2)++(-0.2,-0.5) node {$2$};

\end{scope}

\begin{scope}[yshift = 0 cm, xshift = 12cm]
%% vertices defined %%
\path (0,0) coordinate (v11);
\path (2.5,0) coordinate (v01);
\path (5,0) coordinate (v21);
\path (0,2.5) coordinate (v10);
\path (2.5,2.5) coordinate (v00);
\path (5,2.5) coordinate (v20);
\path (0,5) coordinate (v12);
\path (2.5,5) coordinate (v02);
\path (5,5) coordinate (v22);
\path (3.8,3.8) coordinate (v33);

%% edges %%
\draw (v11) -- (v10) -- (v12);
\draw (v01) -- (v00) -- (v02);
\draw (v21) -- (v20) -- (v22);

\draw (v11) -- (v01) -- (v21);\draw (v10) -- (v00) -- (v20);\draw (v12) -- (v02) -- (v22);
\draw (v00) -- (v33);

% 		\draw (x2) .. controls (2,2) and (2.5,1.5) .. (v2);
%% vertices %%%
\draw (v00)  [fill=white] circle (\vr);
\draw (v10)  [fill=white] circle (\vr);
\draw (v20)  [fill=white] circle (\vr);
\draw (v01)  [fill=white] circle (\vr);
\draw (v11)  [fill=white] circle (\vr);
\draw (v21)  [fill=white] circle (\vr);
\draw (v02)  [fill=white] circle (\vr);
\draw (v12)  [fill=white] circle (\vr);
\draw (v22)  [fill=white] circle (\vr);
\draw (v33)  [fill=white] circle (\vr);

\draw[right] (v11)++(-0.2,-0.5) node { $11$};\draw[right] (v10)++(-0.2,-0.5) node { $10$};
\draw[right] (v12)++(-0.2,-0.5) node {$12$};\draw[right] (v01)++(-0.2,-0.5) node { $01$};
\draw[right] (v00)++(-0.2,-0.5) node { $00$};\draw[right] (v02)++(-0.2,-0.5) node {$02$};
\draw[right] (v21)++(-0.2,-0.5) node { $21$};\draw[right] (v20)++(-0.2,-0.5) node { $20$};
\draw[right] (v22)++(-0.2,-0.5) node {$22$};\draw[right] (v33)++(-0.2,-0.5) node {$33$};
\end{scope}

\begin{scope}[yshift = -10 cm, xshift = 0cm]
%% vertices defined %%
\foreach \i in {0,1,...,2}
{\path (0+\i*\dh,0) coordinate (v\i11);
\path (2.5+\i*\dh,0+\dv) coordinate (v\i01);
\path (5+\i*\dh,0+2*\dv) coordinate (v\i21);
\path (0+\i*\dh,2.5) coordinate (v\i10);
\path (2.5+\i*\dh,2.5+\dv) coordinate (v\i00);
\path (5+\i*\dh,2.5+2*\dv) coordinate (v\i20);
\path (0+\i*\dh,5) coordinate (v\i12);
\path (2.5+\i*\dh,5+\dv) coordinate (v\i02);
\path (5+\i*\dh,5+2*\dv) coordinate (v\i22);
\path (3.8+\i*\dh,3.8+\dv+0.8) coordinate (v\i33);
}
\path (2.5+\dh-\dz,0+\dv+\dz) coordinate (v331);
\path (2.5+\dh-\dz,2.5+\dv+\dz) coordinate (v330);
\path (2.5+\dh-\dz,5+\dv+\dz) coordinate (v332);

%% edges %%
\foreach \i in {0,1,...,2}
{
\draw (v\i11) -- (v\i10) -- (v\i12);
\draw (v\i01) -- (v\i00) -- (v\i02);
\draw (v\i21) -- (v\i20) -- (v\i22);
\draw (v\i11) -- (v\i01) -- (v\i21);\draw (v\i10) -- (v\i00) -- (v\i20);\draw (v\i12) -- (v\i02) -- (v\i22);
\draw (v\i00) -- (v\i33);
}
\draw (v331) -- (v330) -- (v332);
\draw (v331) -- (v101); \draw (v330) -- (v100);\draw (v332) -- (v102);

% 		\draw (x2) .. controls (2,2) and (2.5,1.5) .. (v2);
%% vertices %%%
\foreach \i in {0,1,...,2}
{
\draw (v\i00)  [fill=white] circle (\vr);
\draw (v\i10)  [fill=white] circle (\vr);
\draw (v\i20)  [fill=white] circle (\vr);
\draw (v\i01)  [fill=white] circle (\vr);
\draw (v\i11)  [fill=white] circle (\vr);
\draw (v\i21)  [fill=white] circle (\vr);
\draw (v\i02)  [fill=white] circle (\vr);
\draw (v\i12)  [fill=white] circle (\vr);
\draw (v\i22)  [fill=white] circle (\vr);
\draw (v\i33)  [fill=white] circle (\vr);
}
\foreach \i in {0,1,...,2}
\foreach \j in {0,1,...,2}
{\draw (v0\i\j) -- (v1\i\j) -- (v2\i\j);
}
\draw (v033) -- (v133) -- (v233);
\draw (v330)  [fill=white] circle (\vr);\draw (v331)  [fill=white] circle (\vr);
\draw (v332)  [fill=white] circle (\vr);
\draw[left] (v332)++(0.1,+0.4) node { $332$};\draw[left] (v330)++(0.1,0) node { $330$};
\draw[left] (v331)++(0.1,0) node { $331$};\draw[right] (v100)++(-0.2,-0.5) node { $000$};
\draw[left] (v033)++(0.1,0) node { $133$};\draw[right] (v233)++(-0.3,0.5) node { $233$};
\draw[right] (v133)++(-0.4,0.7) node { $033$};

\draw[left] (v111)++(-0.0,-0.4) node { $011$};\draw[right] (v101)++(-0.2,-0.5) node { $001$};
\draw[left] (v110)++(-0.0,-0.4) node { $010$};\draw[left] (v112)++(-0.0,-0.4) node { $012$};
\draw[left] (v122)++(0.2,+0.5) node {$022$};\draw[left] (v121)++(0.3,+0.4) node {$021$};
\draw[right] (v120)++(-0.2,0.3) node {$020$};\draw[right] (v102)++(-0.5,0.8) node {$002$};

\draw[right] (v201)++(-0.2,-0.5) node { $201$};
\draw[right] (v221)++(-0.2,-0.5) node {$221$};\draw[right] (v220)++(-0.2,-0.5) node {$220$};
\draw[left] (v222)++(0.2,+0.5) node {$222$};\draw[right] (v211)++(-0.2,-0.5) node { $211$};
\draw[right] (v200)++(-0.2,-0.5) node { $200$};\draw[left] (v202)++(0.2,+0.5) node { $202$};
\draw[right] (v210)++(-0.2,-0.5) node { $210$};\draw[right] (v212)++(-0.2,-0.5) node { $212$};

\draw[left] (v000)++(0.2,+0.5) node { $100$};\draw[left] (v002)++(0.2,+0.5) node { $102$};
\draw[left] (v022)++(0.2,+0.5) node {$122$};\draw[left] (v011)++(0.2,+0.5) node { $111$};
\draw[left] (v010)++(0.2,+0.5) node { $110$};\draw[left] (v012)++(0.2,+0.5) node { $112$};
\draw[left] (v001)++(0.2,+0.5) node { $101$};\draw[left] (v021)++(0.2,+0.5) node { $121$};
\draw[left] (v020)++(0.2,+0.25) node { $120$};
%\draw[right] (v11)++(-0.2,-0.5) node { $11$};\draw[right] (v10)++(-0.2,-0.5) node { $10$};
%\draw[right] (v12)++(-0.2,-0.5) node {$12$};\draw[right] (v01)++(-0.2,-0.5) node { $01$};
%\draw[right] (v00)++(-0.2,-0.5) node { $00$};\draw[right] (v02)++(-0.2,-0.5) node {$02$};
%\draw[right] (v21)++(-0.2,-0.5) node { $21$};\draw[right] (v20)++(-0.2,-0.5) node { $20$};
%\draw[right] (v22)++(-0.2,-0.5) node {$22$};\draw[right] (v33)++(-0.2,-0.5) node {$33$};
\end{scope}
\end{tikzpicture}
\end{center}
\caption{Munarini graphs $M_{n,3}$ for $n\in \{0,1,2,3\}$. \label{fig:Muna}}
\end{figure}
Note that $M_{0,k}=K_1$ and $M_{1,k}$ is the star on $k$ vertices $S_{k-1}$.The first $M_{n,3}$ are shown in Figure~\ref{fig:Muna}.

\begin{proposition}\label{prop:eq}
Let $n\geq 1$. Then  $M_{n,1}$ and $M_{n,2}$  are respectively isomorphic to $\Gamma_{n-1}$ and $\Pi_{n}$. 
\end{proposition}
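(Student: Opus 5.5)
The plan is to give explicit bijections on vertex sets and check that they carry adjacencies to adjacencies in both directions. The two cases $k=1$ and $k=2$ are handled separately.

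\emph{Case $k=1$.} Here ${\cal F}_{n,1}$ is the monoid generated by $\{0,11\}$, restricted to length $n$. In $M_{n,1}$ the set $\{1,\ldots,k-1\}$ is empty, so the only adjacencies come from replacing a substring $00$ by $11$ or vice versa, with the result still in ${\cal F}_{n,1}$. I will use the bijection $\theta:{\cal F}_{n,1}\to{\cal F}_{n-1}$ described before the definition: parse the string into letters $0$ and $11$, replace each $11$ by $10$, and delete the final $0$.

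For the forward direction, let $u,v\in{\cal F}_{n,1}$ be adjacent, with $v$ obtained from $u$ by turning a substring $00$ at positions $j,j+1$ into $11$. Since $v$ is in the monoid, these positions form a letter $11$ of $v$. In $u$ they are the two letters $0,0$, and all other letters are unchanged. So $\theta(u)0$ and $\theta(v)0$ differ only at position $j$ (the pair becomes $00$ versus $10$). Because $j+1\le n$, we have $j\le n-1$, and this difference survives the deletion of the last $0$. Hence $\theta(u)$ and $\theta(v)$ are at Hamming distance $1$.

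For the converse, suppose $\theta(u)$ and $\theta(v)=\theta(u)+\delta_j$ are both Fibonacci strings, with the $1$ placed in $\theta(v)$. Then in $\theta(v)0$ the $1$ at position $j$ is followed by a $0$, so positions $j,j+1$ form the letter $10$, which corresponds to $11$ in $v$. In $\theta(u)0$ those positions are $00$. Therefore $u$ and $v$ differ exactly by $00\leftrightarrow 11$ at positions $j,j+1$, which is an edge of $M_{n,1}$.

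\emph{Case $k=2$.} Here both graphs have vertex set ${\cal P}_n$. The edges of $M_{n,2}$ are $0\leftrightarrow 1$ and $00\leftrightarrow 22$; the edges of $\Pi_n$ are $0\leftrightarrow1$ and $11\leftrightarrow22$. I will take the map $\varphi$ that swaps the symbols $0$ and $1$ and fixes $2$. It preserves runs of $2$s, so it maps ${\cal P}_n$ onto itself and is an involution. Under $\varphi$:
\begin{itemize}
\item a change $0\leftrightarrow1$ at one position stays a change $1\leftrightarrow0$ at that position;
\item a substring $00$ becomes $11$, so a replacement $00\leftrightarrow22$ becomes $11\leftrightarrow22$;
\item the condition ``the new string is again a Pell string'' is preserved, because $\varphi$ maps ${\cal P}_n$ to itself.
\end{itemize}
Hence $\varphi$ is an isomorphism $M_{n,2}\to\Pi_n$. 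Figure~\ref{fig:Pi23M23} already suggests this relabelling.

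The only delicate point is in the $k=1$ case: checking that the letter decomposition really aligns with the modified positions, and that the deleted final $0$ never hides a difference. Both are handled by the observations that $j\le n-1$ and that a $1$ in a Fibonacci string followed by a $0$ is parsed as the letter $10$.
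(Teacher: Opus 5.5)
Your proof is correct and follows the same route as the paper: for $k=2$ you use the swap $0\leftrightarrow 1$, which maps ${\cal P}_n$ onto itself and turns the adjacency rule of $M_{n,2}$ into that of $\Pi_n$, and for $k=1$ you use the bijection $\theta$. You also supply the adjacency check for $\theta$ that the paper only asserts, namely the letter alignment via unique factorization and the observation $j\le n-1$, so the deleted final $0$ never hides the differing position.
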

\begin{proof}
Systematically replacing $1$ with $0$ and $0$ with $1$ in a Pell string produces a Pell string. Furthermore, this exchange between $0$ and $1$ transforms the adjacency definition in $M_{n,2}$ into that in $\Pi_n$, hence $M_{n,2}\cong\Pi_{n}$.

Similarly, the mapping $\theta$ between the vertices of $M_{n,1}$ and those of $\Gamma_{n-1}$ preserves adjacency. The two graph isomorphisms are illustrated in figure $\ref{fig:isom}$.
\end{proof}
\qed

\begin{figure}[ht!]
\begin{center}
\begin{tikzpicture}[scale=0.80,style=thick]
\tikzstyle{every node}=[draw=none,fill=none]
\def\vr{3pt} % \vr = vertex radius;  Set \vr = 2/scale for uniform sizing of vertices
\def\dec{7} % decalage
\begin{scope}[yshift = 0cm, xshift = 0cm]
%% vertices defined %%
\path (9.0-\dec,0) coordinate (u001);
\path (10.5-\dec,0) coordinate (u000);
\path (12-\dec,0) coordinate (u010);
\path (9.0-\dec,1.5) coordinate (u101);
\path (10.5-\dec,1.5) coordinate (u100);

\path (9.0,0) coordinate (v001);
\path (10.5,0) coordinate (v000);
\path (12,0) coordinate (v010);
\path (9.0,1.5) coordinate (v101);
\path (10.5,1.5) coordinate (v100);

\path (9.0+\dec,0) coordinate (w001);
\path (10.5+\dec,0) coordinate (w000);
\path (12+\dec,0) coordinate (w010);
\path (9.0+\dec,1.5) coordinate (w101);
\path (10.5+\dec,1.5) coordinate (w100);

%% edges %%
\draw (u010) -- (u000) -- (u001) -- (u101) -- (u100) -- (u000);
\draw (v010) -- (v000) -- (v001) -- (v101) -- (v100) -- (v000);
\draw (w010) -- (w000) -- (w001) -- (w101) -- (w100) -- (w000);

% 		\draw (x2) .. controls (2,2) and (2.5,1.5) .. (v2);
%% vertices %%%
\draw (u001)  [fill=white] circle (\vr);
\draw (u101)  [fill=white] circle (\vr);
\draw (u100)  [fill=white] circle (\vr);
\draw (u000)  [fill=white] circle (\vr);
\draw (u010)  [fill=white] circle (\vr);

\draw (v001)  [fill=white] circle (\vr);
\draw (v101)  [fill=white] circle (\vr);
\draw (v100)  [fill=white] circle (\vr);
\draw (v000)  [fill=white] circle (\vr);
\draw (v010)  [fill=white] circle (\vr);

\draw (w001)  [fill=white] circle (\vr);
\draw (w101)  [fill=white] circle (\vr);
\draw (w100)  [fill=white] circle (\vr);
\draw (w000)  [fill=white] circle (\vr);
\draw (w010)  [fill=white] circle (\vr);
%% text %%
\draw[above] (u001)++(0.5,0.0) node {$00\widehat{11}$};
\draw[above] (u101)++(0.4,0.0) node {$\widehat{11}\widehat{11}$};
\draw[above] (u100)++(0.4,0.0) node {$\widehat{11}00$};
\draw[above] (u000)++(0.5,0.0) node {$0000$};
\draw[above] (u010)++(0.4,0.0) node {$0\widehat{11}0$};

\draw[above] (v001)++(0.5,0.0) node {$00\widehat{10}$};
\draw[above] (v101)++(0.4,0.0) node {$\widehat{10}\widehat{10}$};
\draw[above] (v100)++(0.4,0.0) node {$\widehat{10}00$};
\draw[above] (v000)++(0.5,0.0) node {$0000$};
\draw[above] (v010)++(0.4,0.0) node {$0\widehat{10}0$};

\draw[above] (w001)++(0.4,0.0) node {$001$};
\draw[above] (w101)++(0.4,0.0) node {$101$};
\draw[above] (w100)++(0.4,0.0) node {$100$};
\draw[above] (w000)++(0.4,0.0) node {$000$};
\draw[above] (w010)++(0.4,0.0) node {$010$};

%%%
\draw (10.6-\dec,-0.7) node {$M_{4,1}$};
\draw (10.6,-0.7) node {$\Gamma_30$};
\draw (10.6+\dec,-0.7) node {$\Gamma_3$};

\draw (14-\dec,0.7) node {\huge{$\Rightarrow$}};
\draw (14,0.7) node {\huge{$\Rightarrow$}};

\end{scope}
\begin{scope}[yshift = -7cm, xshift = 5cm]
\def\deb{10}

\path (0,0) coordinate (a111);\path (0,3.6) coordinate (a000);
\path (1.2,2.4) coordinate (a001);\path (1.2,1.2) coordinate (a011);
\path (0,2.4) coordinate (a010);\path (0,1.2) coordinate (a101);
\path (-1.2,2.4) coordinate (a100);\path (-1.2,1.2) coordinate (a110);
\path (2.4,3.6) coordinate (a221);\path (1.2,4.8) coordinate (a220);
\path (-1.2,4.8) coordinate (a022);\path (-2.4,3.6) coordinate (a122);

\draw (a111) -- (a110) -- (a010) -- (a011) -- (a111);
\draw (a101) -- (a100) -- (a000) -- (a001) -- (a101);
\draw (a111) -- (a101);\draw (a110) -- (a100);
\draw (a010) -- (a000);\draw (a011) -- (a001);
\draw (a001) -- (a221) -- (a220) -- (a000);
\draw (a100) -- (a122) -- (a022) -- (a000);
\draw (a000)  [fill=white] circle (\vr);\draw (a100)  [fill=white] circle (\vr);
\draw (a001)  [fill=white] circle (\vr);\draw (a010)  [fill=white] circle (\vr);
\draw (a110)  [fill=white] circle (\vr);\draw (a101)  [fill=white] circle (\vr);
\draw (a111)  [fill=white] circle (\vr);\draw (a011)  [fill=white] circle (\vr);
\draw (a220)  [fill=white] circle (\vr);\draw (a221)  [fill=white] circle (\vr);
\draw (a022)  [fill=white] circle (\vr);\draw (a122)  [fill=white] circle (\vr);
\draw[above] (a000)++(0.0,0.13) node {$000$};
\draw[right] (a111)++(0.0,0.0) node {$111$};
\draw[left] (a110)++(0.0,0.0) node {$110$};
\draw[left] (a100)++(0.0,0.0) node {$100$};
\draw[left] (a122)++(0.0,0.0) node {$122$};
\draw[right] (a221)++(0.0,0.0) node {$221$};
\draw[right] (a001)++(0.0,0.0) node {$001$};
\draw[right] (a011)++(0.0,0.0) node {$011$};
\draw[left] (a101)++(0.0,0.0) node {$101$};
\draw[left] (a010)++(0.0,0.0) node {$010$};
\draw[above] (a022)++(0.0,0.0) node {$022$};
\draw[above] (a220)++(0.0,0.0) node {$220$};

\path (0+\deb,0) coordinate (b111);\path (0+\deb,3.6) coordinate (b000);
\path (1.2+\deb,2.4) coordinate (b001);\path (1.2+\deb,1.2) coordinate (b011);
\path (0+\deb,2.4) coordinate (b010);\path (0+\deb,1.2) coordinate (b101);
\path (-1.2+\deb,2.4) coordinate (b100);\path (-1.2+\deb,1.2) coordinate (b110);
\path (2.4+\deb,3.6) coordinate (b221);\path (1.2+\deb,4.8) coordinate (b220);
\path (-1.2+\deb,4.8) coordinate (b022);\path (-2.4+\deb,3.6) coordinate (b122);

\draw (b111) -- (b110) -- (b010) -- (b011) -- (b111);
\draw (b101) -- (b100) -- (b000) -- (b001) -- (b101);
\draw (b111) -- (b101);\draw (b110) -- (b100);
\draw (b010) -- (b000);\draw (b011) -- (b001);
\draw (b001) -- (b221) -- (b220) -- (b000);
\draw (b100) -- (b122) -- (b022) -- (b000);
\draw (b000)  [fill=white] circle (\vr);\draw (b100)  [fill=white] circle (\vr);
\draw (b001)  [fill=white] circle (\vr);\draw (b010)  [fill=white] circle (\vr);
\draw (b110)  [fill=white] circle (\vr);\draw (b101)  [fill=white] circle (\vr);
\draw (b111)  [fill=white] circle (\vr);\draw (b011)  [fill=white] circle (\vr);
\draw (b220)  [fill=white] circle (\vr);\draw (b221)  [fill=white] circle (\vr);
\draw (b022)  [fill=white] circle (\vr);\draw (b122)  [fill=white] circle (\vr);

\draw[above] (b000)++(0.0,0.13) node {$111$};
\draw[right] (b111)++(0.0,0.0) node {$000$};
\draw[left] (b110)++(0.0,0.0) node {$001$};
\draw[left] (b100)++(0.0,0.0) node {$011$};
\draw[left] (b122)++(0.0,0.0) node {$022$};
\draw[right] (b221)++(0.0,0.0) node {$220$};
\draw[right] (b001)++(0.0,0.0) node {$110$};
\draw[right] (b011)++(0.0,0.0) node {$100$};
\draw[left] (b101)++(0.0,0.0) node {$010$};
\draw[left] (b010)++(0.0,0.0) node {$101$};
\draw[above] (b022)++(0.0,0.0) node {$122$};
\draw[above] (b220)++(0.0,0.0) node {$221$};

\draw(5,1) node {\huge{$\Rightarrow$}};
\draw (0,-1) node {$M_{3,2}$};
\draw (0+\deb,-1) node {$\Pi_3$};

\end{scope}
\end{tikzpicture}
\end{center}
\caption{$M_{4,1}\cong\Gamma_{3}$ and $M_{3,2}\cong\Pi_{3}$
\label{fig:isom}}
\end{figure}

Since the vertex set  of $M_{n,k}$ is ${\cal F}_{n,k}$, the following proposition is immediate.
\begin{proposition}
The order of  $M_{n,k}$ is $F_{n+1,k}$ with generating function $$\sum_{n\geq0}{|V(M_{n,k})|t^n}=\frac{1}{1-kt-t^2}\,.$$
\end{proposition}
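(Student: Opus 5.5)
Since $V(M_{n,k})={\cal F}_{n,k}$ by definition, the order of $M_{n,k}$ is $|{\cal F}_{n,k}|$. The plan is to establish $|{\cal F}_{n,k}|=F_{n+1,k}$ carefully and then compute the generating function.

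\textbf{Counting.} Write $a_n=|{\cal F}_{n,k}|$. Since ${\cal F}_{0,k}=\{\lambda\}$ and ${\cal F}_{1,k}=\{0,1,\ldots,k-1\}$, we have $a_0=1=F_{1,k}$ and $a_1=k=F_{2,k}$. For the recursion I will use the decomposition
$${\cal F}_{n+2,k}= \dot{\bigcup}_{i= 0}^{k-1}{i{\cal F}_{n+1,k}}\dot{\cup} kk{\cal F}_{n,k},$$
which follows from unique factorization in the monoid generated by $\{0,1,\ldots,k-1,kk\}$. Indeed, a nonempty word begins with exactly one letter, and removing that letter leaves a word of the monoid of the appropriate length. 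Disjointness comes from the first character, so $a_{n+2}=ka_{n+1}+a_n$. This is the same recurrence as $F_{n+2,k}=kF_{n+1,k}+F_{n,k}$, and the initial values agree, so induction on $n$ gives $a_n=F_{n+1,k}$.

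\textbf{Generating function.} I will use the known series $\sum_{n\ge0}F_{n,k}t^n=t/(1-kt-t^2)$. Since $F_{0,k}=0$,
$$\sum_{n\ge0}F_{n+1,k}t^n=\frac1t\sum_{m\ge1}F_{m,k}t^m=\frac1t\cdot\frac{t}{1-kt-t^2}=\frac{1}{1-kt-t^2}.$$
Alternatively, I could derive it directly: multiply the recurrence $a_{n+2}=ka_{n+1}+a_n$ by $t^{n+2}$, sum over $n\geq0$, and use $a_0=1$ and $a_1=k$ to solve for $A(t)=\sum_{n\ge0} a_nt^n$. This gives $A(t)(1-kt-t^2)=1$.

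\textbf{Main obstacle.} There is no real difficulty. The only point needing care is the justification of the disjoint decomposition, in particular that a string starting with $k$ must start with $kk$; this holds because runs of $k$s have even length.
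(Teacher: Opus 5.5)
Your proof is correct and follows essentially the same route as the paper. The paper also obtains $|{\cal F}_{n,k}|=F_{n+1,k}$ from ${\cal F}_{0,k}=\{\lambda\}$, ${\cal F}_{1,k}=\{0,\ldots,k-1\}$ and the disjoint decomposition ${\cal F}_{n+2,k}=\dot{\bigcup}_{i=0}^{k-1} i{\cal F}_{n+1,k}\dot{\cup} kk{\cal F}_{n,k}$, and it reads the generating function off $\sum_{n\ge0}F_{n,k}t^n=t/(1-kt-t^2)$, so your index shift and your direct check $A(t)(1-kt-t^2)=1$ both match it.
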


Let $n\geq2$. Since $V(M_{n,k})$  partitions as ${\cal F}_{n,k}= \dot{\bigcup}_{i= 0}^{k-1}{i{\cal F}_{n-1,k}}\dot{\cup} kk{\cal F}_{n-2,k}$ the Munarini graphs admit the following recursive decomposition (see Figure~\ref{fig:Recursive} right).

For $i\in\{0,1,\ldots,k-1\}$ each of the subgraphs induced by $i{\cal F}_{n-1,k}$ is isomorphic to $M_{n-1,k}$ and is denoted by $iM_{n-1,k}$. Similarly the subgraph induced by $kk{\cal F}_{n-2,k}$ is isomorphic to $M_{n-2,k}$ and is denoted by $kkM_{n-2,k}$.
According to the definition of Munarini graphs there are three kind of edges. First the edges of the subgraph induced by $\dot{\bigcup}_{i= 0}^{k-1}{i{\cal F}_{n-1,k}}$, subgraph isomorphic to the Cartesian product of $M_{n-1,k}$ with the star $S_{k-1}$.  The other edges are those of $kkM_{n-2,k}$ and the edges of the matching  between  $kkM_{n-2,k}$ and the subgraph $00M_{n-2,k}$ of $0M_{n-1,k}$. 

Therefore the size of $M_{n,k}$ satisfies 
\begin{align*}
|E(M_{n,k})|& =k|E(M_{n-1,k})|+(k-1)|V(M_{n-1,k})|+|E(M_{n-2,k})|+|V(M_{n-2,k})|\\
&= k|E(M_{n-1,k})|+|E(M_{n-2,k})|+ F_{n+1,k}-F_{n,k}\\
\text{with}\\
 |E(M_{0,k})|&=0 \text{ and } |E(M_{0,1})|=k-1.
\end{align*}

The size of generalized Pell graphs follows the same recurrence relation (see~\cite{IKT-2023}). Therefore $|E(M_{n,k})|=|E(\Pi_{n,k})|$. The generating function of $|E(\Pi_{n,k})|$ was determined from this recurrence relation in \cite{IKT-2023}. We have thus the following result.
\begin{proposition}
The generating function  of the number of edges in  $M_{n,k}$ is   $$\sum_{n\geq0}{|E(M_{n,k})|t^n}=\frac{(k-1)t+t^2}{(1-kt-t^2)^2}\,.$$
\end{proposition}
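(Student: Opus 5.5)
Let $E(t)=\sum_{n\geq0}|E(M_{n,k})|t^n$ and $V(t)=\sum_{n\geq0}F_{n+1,k}t^n=\frac{1}{1-kt-t^2}$ (the order generating function from the previous proposition). The plan is to turn the size recurrence derived above into a functional equation for $E(t)$ and solve it. I will not rely on \cite{IKT-2023}, so the argument is self-contained.

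\textbf{Initial values.} We have $|E(M_{0,k})|=0$. Since $M_{1,k}=S_{k-1}$, we also have $|E(M_{1,k})|=k-1$; this is what the displayed condition ``$|E(M_{0,1})|=k-1$'' is meant to say.

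\textbf{Recurrence.} For $n\geq2$,
$$|E(M_{n,k})|=k|E(M_{n-1,k})|+|E(M_{n-2,k})|+F_{n+1,k}-F_{n,k}.$$
Multiply by $t^n$ and sum over $n\geq2$. The left side gives $E(t)-(k-1)t$. The first two terms on the right give $ktE(t)+t^2E(t)$, using $|E(M_{0,k})|=0$.

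\textbf{Inhomogeneous term.} We need $\sum_{n\geq2}(F_{n+1,k}-F_{n,k})t^n$. Since $F_{n+1,k}$ is the coefficient of $t^n$ in $V(t)$, we have $\sum_{n\geq0}F_{n+1,k}t^n=V(t)$ and $\sum_{n\geq0}F_{n,k}t^n=tV(t)$. Hence
$$\sum_{n\geq0}(F_{n+1,k}-F_{n,k})t^n=(1-t)V(t).$$
Removing the $n=0$ and $n=1$ terms, which are $1$ and $(k-1)t$, gives
$$\sum_{n\geq2}(F_{n+1,k}-F_{n,k})t^n=(1-t)V(t)-1-(k-1)t.$$

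\textbf{Solving.} The $(k-1)t$ terms cancel on the two sides, leaving
$$(1-kt-t^2)E(t)=(1-t)V(t)-1=\frac{1-t-(1-kt-t^2)}{1-kt-t^2}=\frac{(k-1)t+t^2}{1-kt-t^2}.$$
Dividing by $1-kt-t^2$ gives the claimed formula.

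The only step needing care is the bookkeeping of boundary terms when shifting indices, namely that the $(k-1)t$ contributions cancel. As a sanity check, the coefficient of $t$ is $k-1$. The coefficient of $t^2$ is $k(k-1)+1+k$, and the recurrence gives $k(k-1)+0+(k^2+1-k)=2k^2-2k+1$. Both equal $k^2+1$ at $k=2$, matching the $12$ vertices and $13$ edges of $\Pi_2$ with $|E(\Pi_2)|=5$. For general $k$ I will compare both expressions directly from the expansion rather than trust the mental check.
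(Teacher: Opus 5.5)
Your proof is correct and is essentially the paper's argument. You use the same size recurrence coming from the decomposition into the copies $iM_{n-1,k}$ and $kkM_{n-2,k}$. The only difference is that you carry out the generating-function algebra yourself instead of invoking the computation for $\Pi_{n,k}$ in \cite{IKT-2023}, which also covers $k=1$, where $\Pi_{n,k}$ is not defined in \cite{IKT-2023}.

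Your sanity check contains a slip. The coefficient of $t^2$ in the closed form is $2k(k-1)+1=2k^2-2k+1$, not $k^2+1$, so it matches the recurrence for every $k$. For example, it gives $5$ edges for $\Pi_2\cong M_{2,2}$, which has $5$ vertices, and $13$ edges for $M_{2,3}$.
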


Munarini, Perelli Cippo and Zagaglia Salvi gave the following expression for the size of Fibonacci cubes as linear combination of $F_n$, $nF_n$ and $nF_{n+1}$
\begin{proposition}\cite{MPZ-2001}
For any positive integer $n$ the size of $\Gamma_n$ is 
$$|E(\Gamma_n)|=\frac{nF_{n+1}+2(n+1)F_n}{5}\,.
$$
\end{proposition}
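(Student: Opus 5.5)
We prove $|E(\Gamma_n)|=\frac{nF_{n+1}+2(n+1)F_n}{5}$ by induction on $n$, using the recursive structure of Fibonacci cubes. The first step is the size recurrence. The decomposition ${\cal F}_{n}=0{\cal F}_{n-1}\dot{\cup} 10{\cal F}_{n-2}$ (valid for $n\geq 2$) splits $\Gamma_n$ into two induced subgraphs, $0\Gamma_{n-1}\cong\Gamma_{n-1}$ and $10\Gamma_{n-2}\cong\Gamma_{n-2}$. The only other edges use the first coordinate. Each such edge joins $10u$ to $00u$ for $u\in{\cal F}_{n-2}$, so they form a perfect matching of $10\Gamma_{n-2}$ into $0\Gamma_{n-1}$, with $|V(\Gamma_{n-2})|=F_n$ edges. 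Hence
$$|E(\Gamma_n)|=|E(\Gamma_{n-1})|+|E(\Gamma_{n-2})|+F_n,\qquad n\geq 2,$$
with $|E(\Gamma_0)|=0$ and $|E(\Gamma_1)|=1$. The same recurrence can also be read off from the recurrence for $|E(M_{n,1})|$ above via $M_{n,1}\cong\Gamma_{n-1}$ (Proposition~\ref{prop:eq}).

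The second step checks the base cases against $e_n:=\frac{nF_{n+1}+2(n+1)F_n}{5}$. For $n=0$ we get $e_0=0$, and for $n=1$ we get $e_1=(1+4)/5=1$. As a sanity check, $n=2$ gives $(2\cdot 2+6\cdot 1)/5=2$, which matches $\Gamma_2\cong P_3$.

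The third step, and the only real computation, verifies $e_n=e_{n-1}+e_{n-2}+F_n$. Multiplying by $5$, we need
$$nF_{n+1}+2(n+1)F_n=(n-1)F_n+2nF_{n-1}+(n-2)F_{n-1}+2(n-1)F_{n-2}+5F_n.$$
On the left, write $F_{n+1}=F_n+F_{n-1}$, so the left side equals $(3n+2)F_n+nF_{n-1}$. On the right, use $F_{n-2}=F_n-F_{n-1}$ to get $(n-1)F_n+(3n-2)F_{n-1}+2(n-1)F_n-2(n-1)F_{n-1}+5F_n$, which simplifies to $(3n+2)F_n+nF_{n-1}$. The two sides agree, and induction completes the proof.

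No step is a genuine obstacle. The only point needing care is the edge count in the first step: no edge of $\Gamma_n$ joins $10{\cal F}_{n-2}$ to a vertex $01\ldots$, since those differ in two coordinates. With that settled, the identity check is mechanical. An alternative route is to extract the coefficients of $t^2/(1-t-t^2)^2$ (the case $k=1$ of the edge generating function, shifted by one index), but the direct induction is shorter.
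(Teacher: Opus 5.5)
Your proof is correct. The recurrence $|E(\Gamma_n)|=|E(\Gamma_{n-1})|+|E(\Gamma_{n-2})|+F_n$ is properly justified from ${\cal F}_n=0{\cal F}_{n-1}\dot{\cup}10{\cal F}_{n-2}$, the base cases check, and both sides do reduce to $(3n+2)F_n+nF_{n-1}$.

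The paper does not prove this statement; it quotes it from the literature. The closest argument in the paper is the proof of its generalization, Proposition~\ref{prop:sizelin}. That proof starts from the analogous recurrence but then works with generating functions. It writes $\frac{(k-1)t+t^2}{(1-kt-t^2)^2}$ as $af(t)+bg(t)+ch(t)$, where $f,g,h$ are the generating functions of $F_{n,k}$, $nF_{n,k}$ and $nF_{n+1,k}$ obtained by differentiation, and then matches coefficients. At $k=1$ this gives $|E(M_{n,1})|=\frac{2nF_{n+1}-(n+1)F_n}{5}$, which becomes the stated formula after the shift $M_{n,1}\cong\Gamma_{n-1}$. The alternative you mention at the end is essentially this route.

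What each approach buys: the generating-function method actually derives the coefficients $a,b,c$, while your induction only confirms a closed form known in advance. In return, your argument is fully elementary and self-contained.

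Your method also extends to all $k$ at little cost. Put $\ell_n=nF_{n+1,k}$ and $r_n=(n+1)F_{n,k}$. The $k$-Fibonacci recurrence gives $\ell_n-k\ell_{n-1}-\ell_{n-2}=kF_{n,k}+2F_{n-1,k}$ and $r_n-kr_{n-1}-r_{n-2}=kF_{n-1,k}+2F_{n-2,k}$. So checking $e_n=ke_{n-1}+e_{n-2}+F_{n+1,k}-F_{n,k}$ for the general closed form reduces to two identities: $k(k^2-k+2)+2(k-2)=(k-1)(k^2+4)$ and $2(k^2-k+2)-k(k-2)=k^2+4$. This gives a proof of Proposition~\ref{prop:sizelin} with no generating functions at all.
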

Therefore $|E(M_{n,1})|=|E(\Gamma_{n-1})|=\frac{2nF_{n+1}-(n+1)F_n}{5}$. We can generalize this formula to all Munarini graphs.

\begin{proposition}\label{prop:sizelin}
For any positive integers $n$ and $k$ the sizes of $M_{n,k}$ and $\Pi_{n,k}$ are
$$|E(M_{n,k})|=|E(\Pi_{n,k})|=\frac{(k^2-k+2)nF_{n+1,k}+(k-2)(n+1)F_{n,k}}{k^2+4}\,.
$$
\end{proposition}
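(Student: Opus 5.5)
Since $|E(M_{n,k})|=|E(\Pi_{n,k})|$ has already been established, it suffices to prove the closed formula for $e_n:=|E(M_{n,k})|$. We use the recurrence derived above,
$$e_n=k e_{n-1}+e_{n-2}+F_{n+1,k}-F_{n,k},\qquad e_0=0,\ e_1=k-1,$$
and argue by induction on $n$. Write $D=k^2+4$ and
$$g_n=\frac{(k^2-k+2)nF_{n+1,k}+(k-2)(n+1)F_{n,k}}{D}.$$
The plan is to check that $g_n$ has the right initial values and satisfies the same recurrence; uniqueness of solutions of a second-order recurrence then gives $e_n=g_n$. For the base cases, $g_0=0$ because $F_{0,k}=0$. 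Also $g_1=\bigl((k^2-k+2)k+2(k-2)\bigr)/D=(k^3-k^2+4k-4)/D=(k-1)(k^2+4)/D=k-1$. Both agree with $e_0$ and $e_1$.

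For the inductive step we must show that $D(g_n-kg_{n-1}-g_{n-2})=D(F_{n+1,k}-F_{n,k})$. We split $Dg_n$ into a part homogeneous in $n$ and a remainder. Write $a=k^2-k+2$ and $b=k-2$, so that
$$Dg_n=n\bigl(aF_{n+1,k}+bF_{n,k}\bigr)+bF_{n,k}.$$
Set $u_n=aF_{n+1,k}+bF_{n,k}$; this satisfies the $k$-Fibonacci recurrence. The $b F_{n,k}$ part contributes $0$ to $g_n-kg_{n-1}-g_{n-2}$. For the part $nu_n$ we have
$$nu_n-k(n-1)u_{n-1}-(n-2)u_{n-2}=ku_{n-1}+2u_{n-2},$$
since the $n$-multiples cancel by the recurrence. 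So the whole step reduces to the identity
$$ku_{n-1}+2u_{n-2}=D(F_{n+1,k}-F_{n,k}).$$

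To verify this identity, express everything in terms of $F_{n,k}$ and $F_{n-1,k}$. Using $F_{n+1,k}=kF_{n,k}+F_{n-1,k}$ and $F_{n-2,k}=F_{n,k}-kF_{n-1,k}$, the left side becomes
$$ka F_{n,k}+kbF_{n-1,k}+2aF_{n-1,k}+2bF_{n,k}-2bkF_{n-1,k}.$$
The coefficient of $F_{n,k}$ is $ka+2b=k^3-k^2+4k-4=(k-1)D$. The coefficient of $F_{n-1,k}$ is $2a-kb=k^2+4=D$. The right side is $D\bigl((k-1)F_{n,k}+F_{n-1,k}\bigr)$, which matches.

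The only delicate point is organizing this coefficient computation; once the reduction to $ku_{n-1}+2u_{n-2}$ is made, it is mechanical. An alternative route is to extract coefficients from the generating function $\frac{(k-1)t+t^2}{(1-kt-t^2)^2}$ using $\sum nF_{n,k}t^n=t\frac{d}{dt}\frac{t}{1-kt-t^2}$. This also works, but the induction is shorter.
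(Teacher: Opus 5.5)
Your proof is correct, but it is organized differently from the paper's. The paper does not induct on $n$. It computes by differentiation the generating functions $f(t)=\sum_{n\ge0}F_{n,k}t^n$, $g(t)=\sum_{n\ge0}nF_{n,k}t^n=tf'(t)$ and $h(t)=\sum_{n\ge0}nF_{n+1,k}t^n$, checks the rational identity
$$\frac{(k-1)t+t^2}{(1-kt-t^2)^2}=\frac{k-2}{k^2+4}\bigl(f(t)+g(t)\bigr)+\frac{k^2-k+2}{k^2+4}\,h(t),$$
and then reads off coefficients. You instead verify directly that the closed form satisfies the inhomogeneous recurrence with $e_0=0$ and $e_1=k-1$. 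Your choice $e_1=k-1$ is the right one; the paper's ``$|E(M_{0,1})|=k-1$'' is a typo for $|E(M_{1,k})|=k-1$. The two computations are closer than they look. After clearing denominators, the coefficients of $t$ and $t^2$ in the paper's numerator are exactly your identities $ka+2b=(k-1)(k^2+4)$ and $2a-kb=k^2+4$, in your notation; your $u_n$ device is what isolates them on the recurrence side. Your route works straight from the recurrence. It needs neither the closed-form edge generating function, which the paper takes from the generalized Pell literature, nor the derivative generating functions $g$ and $h$. The paper's route has the advantage of explaining where the formula comes from. Writing the known generating function in the basis $f,g,h$ is a small linear system, and solving it produces the coefficients $\frac{k-2}{k^2+4}$ and $\frac{k^2-k+2}{k^2+4}$. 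An induction, by contrast, can only confirm a formula that has already been guessed.
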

\begin{proof}
The generating functions of $F_{n,k}$, $nF_{n,k}$ and  $nF_{n+1,k}$ are, by differentiation,
\begin{align*}
f(t)=&\sum_{n\geq0}{F_{n,k}t^n}=\frac{t}{1-kt-t^2} \text{ and} \\ 
g(t)=&\sum_{n\geq0}{nF_{n,k}t^n}=\frac{t+t^3}{(1-kt-t^2)^2}\\
h(t)=&\sum_{n\geq0}{nF_{n+1,k}t^n}=\frac{kt+2t^2}{(1-kt-t^2)^2}\,.
\end{align*}

It is immediate to verify that 
\begin{equation}\label{eq:genesize}
\sum_{n\geq0}{|E(M_{n,k})|t^n}=\frac{(k-1)t+t^2}{(1-kt-t^2)^2}=af(t)+bg(t)+ch(t)
\end{equation}
with $a=b=\frac{k-2}{k^2+4}$ and $c=\frac{k^2-k+2}{k^2+4}$.
Since the coefficients of $t^n$ for any integer $n$ are equal in both sides of (\ref{eq:genesize}) the result follows.
\end{proof}
\qed

Note that, for Pell graphs, Proposition~\ref{prop:sizelin} reduces to $$|E(\Pi_{n})|=|E(M_{n,2})|=\frac{nF_{n+1,2}}{2}= \frac{n}{2}|V(\Pi_{n})|\,.$$

\section{Munarini graphs as subgraphs of hypercubes}
\label{sec:sub}
%%%%%%%%%%%%%%%%%%%%%%%%%%%%%%%%%%%%%%%%%%%%%%%%%%%%%%%%%%
%%%%%%%%%%%%%%%%%%%%%%%%%%%%%%%%%%%%%%%%%%%%%%%%%%%%%%%%%%
 It is useful to introduce the following sets of binary strings. 
Let  $A_0=0^k$ and, for $i\in \{1,\ldots,k\}$, let $A_i=0^{i-1}10^{k-i}$.

For given $k\geq 1$, a \emph{Munarini string} is a binary string generated by $$\{0^k,10^{k-1},010^{k-2},\ldots,0^{k-2}10,0^{k-1}10^k\}=\{A_0,A_1,\ldots,A_{k-1}, A_kA_0\}\,.$$

 Note that these strings are Fibonacci strings of length a multiple of $k$ and, for $n\geq0$, we denote by $\Fib_{n,k}^\star$ the set of Munarini strings of length $kn$.
Equivalently, Munarini strings are the elements of the monoid generated by $\A=\{A_0,A_1,\ldots,A_k\}$ with the constraint that every $A_k$ is followed by $A_0$.

Let $\Gamma_{n,k}^\star$ be the subgraph of $\Gamma_{kn}$ induced by $\Fib_{n,k}^\star$. 
For $k=1$, the Munarini strings are exactly the Fibonacci strings ending with a $0$, and $\Gamma_{n,1}^\star$ is $\Gamma_{n-1}0$, the subgraph of $\Gamma_{n}$ induced by $\Fib_{n-1}0$. Similarly, it is proved in~\cite{M-2019} that  $\Pi_n$ is isomorphic to a subgraph of $\Gamma_{2n-1}$. Using a generalization of the isomorphism between $M_{n,1}$ and $\Gamma_{n-1}0$ seen in Proposition~\ref{prop:eq} we will give a similar result for the general case of Munarini graphs.

\begin{theorem}\label{th:isoembed}
$M_{n,k}$ is isomorphic to $\Gamma_{n,k}^\star$, and in particular $M_{n,k}$ is a subgraph of $\Gamma_{kn-1}$.
\end{theorem}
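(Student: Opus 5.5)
Define $\phi:{\cal F}_{n,k}\to\Fib_{n,k}^\star$ letter by letter. A generalized Pell string is uniquely a word over $\{0,1,\ldots,k-1,kk\}$. Send each letter $i\in\{0,\ldots,k-1\}$ to $A_i$ and the letter $kk$ to $A_kA_0$, then concatenate. Since $\{A_0,\ldots,A_{k-1},A_kA_0\}$ has fixed lengths $k$ and $2k$, a Munarini string factors uniquely into blocks of length $k$. The only block that can be $A_k$ is the first half of an $A_kA_0$ pair, and it is always followed by $A_0$. So the factorization is unique and $\phi$ is a length-preserving bijection, the length going from $n$ to $kn$. These strings are Fibonacci strings: each block has at most one $1$. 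The only block ending with $1$ is $A_k$, which is followed by $A_0$. A block starting with $1$ is $A_1$ (or $A_k$ when $k=1$), and the block before it ends with $0$ unless it is $A_k$, which is impossible since $A_k$ is followed by $A_0$.

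Next check that $\phi$ preserves adjacency in both directions. Replacing a $0$ by $i$ ($1\le i\le k-1$) in position $j$ changes block $j$ from $A_0=0^k$ to $A_i$. This flips exactly one bit, so the images differ in Hamming distance $1$. Replacing $00$ by $kk$ at positions $j,j+1$ changes $A_0A_0$ into $A_kA_0$. This flips only the last bit of block $j$, again distance $1$. Conversely, take two Munarini strings at Hamming distance $1$ and decompose both into $k$-blocks. Exactly one block differs, and there it goes from $0^k$ to some $A_i$ with $1\le i\le k$. If $i<k$, the preimages differ by replacing a letter $0$ with $i$. If $i=k$, the following block must be $A_0$ in both strings. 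The preimage of $0^k0^k$ in the first string is the letters $00$: it cannot be the tail of an earlier $kk$, because then the differing block would itself be the second half of a pair and hence $A_0$ in both strings. The preimage in the other string is $kk$. This is the $00\leftrightarrow kk$ move. In both cases the preimages are adjacent in $M_{n,k}$.

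The main care point is the uniqueness and consistency of the block decomposition. In particular, a block $0^k$ could be either a letter $0$ or the second half of $kk$, and I would verify that this ambiguity never affects the converse direction. The argument is that the status of block $j$ is determined by whether block $j-1$ is $A_k$, which is the same in both strings unless the differing block is $j-1$ itself. That case forces $j-1$ to carry $A_k$, which was handled above.

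Finally, every Munarini string of length $kn$ with $n\ge1$ ends with a full block whose last bit is $0$: the last block is not $A_k$, and $A_i$ with $i<k$ ends in $0$. Deleting this final coordinate maps $\Fib_{n,k}^\star$ injectively into $\Fib_{kn-1}$ and preserves Hamming distances. Hence $\Gamma^\star_{n,k}$, and so $M_{n,k}$, is isomorphic to an induced subgraph of $\Gamma_{kn-1}$. This generalizes $\Gamma_{n,1}^\star=\Gamma_{n-1}0$ from Proposition~\ref{prop:eq}.
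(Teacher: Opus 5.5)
Your proof is correct and follows the paper's route: the letter-to-block substitution $\widehat{\Psi}$ ($i\mapsto A_i$, $kk\mapsto A_kA_0$), a blockwise check that edges correspond in both directions using that $A_0$ is adjacent exactly to $A_1,\ldots,A_k$, and deletion of the final $0$. You are more careful than the paper on two points: unique decoding, and the ambiguity of $A_0$ being a tail of $kk$ in the converse direction, which the paper passes over by simply writing the two strings as $xA_iy$ and $xA_0y$ with $x,y$ Munarini strings.
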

\begin{proof}
Let $\Psi(i)=A_i$ for $i\in\{0,\ldots,k-1\}$ and $\Psi(kk)=A_kA_0$. The mapping $\Psi$ between the alphabets $\{0,1,\ldots,k-1,kk\}$ and $\{A_0,A_1,\ldots,A_{k-1},A_kA_0\}$  extends naturally to a bijection $\widehat{\Psi}$ between the monoids they generate. Since $\widehat{\Psi}$ maps a $(k+1)$-ary string of length $n$ to a binary string of length $kn$, $\widehat{\Psi}$ is therefore a bijection between $\Fib_{n,k}$ and $\Fib_{n,k}^\star$. 

Let $i,j\in\{0,1,\ldots,k\}$. Then, there exists an edge in $\Gamma_k$ between $A_i$ and $A_j$ if and only if $i=0$ and $j\in\{1,\ldots,k\}$ (or vice versa). 

Assume there exists an edge in $M_{n,k}$ between $u$ and $v$. Then two cases arise.

In the first case,  $u$ and $v$ can be written as  $u=a0b$ and $v=aib$ with $i\in\{1,\ldots,k-1\}$ where $a$ and $b$ are generalized Pell strings. We then have an edge in $\Gamma_{n,k}^\star$ between 
$\widehat{\Psi}(u)=\widehat{\Psi}(a)A_0\widehat{\Psi}(b)$ and $\widehat{\Psi}(v)=\widehat{\Psi}(a)A_i\widehat{\Psi}(b)$.

The second possibility is that $u=a00b$ and $v=akkb$. Then there exists an edge in $\Gamma_{n,k}^\star$ between 
$\widehat{\Psi}(u)=\widehat{\Psi}(a)A_kA_0\widehat{\Psi}(b)$ and $\widehat{\Psi}(v)=\widehat{\Psi}(a)A_0A_0\widehat{\Psi}(b)$. Note that no other adjacency is possible in $\Gamma_{n,k}^\star$ in this case, because for $i\neq0$, $\widehat{\Psi}(a)A_kA_i\widehat{\Psi}(b)$ does not belong to $\Fib_{n,k}^\star$. 
  
Conversely, assume there exists an edge in $\Gamma_{n,k}^\star$	between $xA_iy$ and $xA_0y$, $i\in\{1,\ldots,k-1\}$ , where $x,y$ are  Munarini strings. Then there exists an edge in $M_{n,k}$ between $\widehat{\Psi}^{-1}(xA_iy)=\widehat{\Psi}^{-1}(x)i\widehat{\Psi}^{-1}(y)$ and $\widehat{\Psi}^{-1}(xA_0y)=\widehat{\Psi}^{-1}(x)0\widehat{\Psi}^{-1}(y)$. The case of an edge between $xA_kA_0y$ and $xA_0A_0y$ is similar since $\widehat{\Psi}^{-1}(A_kA_0)=kk$ and $\widehat{\Psi}^{-1}(A_0A_0)=00$.
$\widehat{\Psi}$ is therefore a graph isomorphism between $M_{n,k}$ and $\Gamma_{n,k}^\star$ .

Since all strings in $\Fib_{n,k}^\star$ end with 0, removing this 0 makes $\Gamma_{n,k}^\star$ itself isomorphic to a subgraph of $\Gamma_{kn-1}$.
\end{proof}
\qed

\begin{figure}[ht!]
\begin{center}
\begin{tikzpicture}[scale=0.80,style=thick]
\tikzstyle{every node}=[draw=none,fill=none]
\def\vr{3pt} % \vr = vertex radius;  Set \vr = 2/scale for uniform sizing of vertices
\def\dec{7} % decalage

\begin{scope}[yshift = 0cm, xshift = 0cm]
\def\deb{10}

\path (0,0) coordinate (a111);\path (0,3.6) coordinate (a000);
\path (1.2,2.4) coordinate (a001);\path (1.2,1.2) coordinate (a011);
\path (0,2.4) coordinate (a010);\path (0,1.2) coordinate (a101);
\path (-1.2,2.4) coordinate (a100);\path (-1.2,1.2) coordinate (a110);
\path (2.4,3.6) coordinate (a221);\path (1.2,4.8) coordinate (a220);
\path (-1.2,4.8) coordinate (a022);\path (-2.4,3.6) coordinate (a122);

\draw (a111) -- (a110) -- (a010) -- (a011) -- (a111);
\draw (a101) -- (a100) -- (a000) -- (a001) -- (a101);
\draw (a111) -- (a101);\draw (a110) -- (a100);
\draw (a010) -- (a000);\draw (a011) -- (a001);
\draw (a001) -- (a221) -- (a220) -- (a000);
\draw (a100) -- (a122) -- (a022) -- (a000);
\draw (a000)  [fill=white] circle (\vr);\draw (a100)  [fill=white] circle (\vr);
\draw (a001)  [fill=white] circle (\vr);\draw (a010)  [fill=white] circle (\vr);
\draw (a110)  [fill=white] circle (\vr);\draw (a101)  [fill=white] circle (\vr);
\draw (a111)  [fill=white] circle (\vr);\draw (a011)  [fill=white] circle (\vr);
\draw (a220)  [fill=white] circle (\vr);\draw (a221)  [fill=white] circle (\vr);
\draw (a022)  [fill=white] circle (\vr);\draw (a122)  [fill=white] circle (\vr);
\draw[above] (a000)++(0.0,0.13) node {$000$};
\draw[right] (a111)++(0.0,0.0) node {$111$};
\draw[left] (a110)++(0.0,0.0) node {$110$};
\draw[left] (a100)++(0.0,0.0) node {$100$};
\draw[left] (a122)++(0.0,0.0) node {$122$};
\draw[right] (a221)++(0.0,0.0) node {$221$};
\draw[right] (a001)++(0.0,0.0) node {$001$};
\draw[right] (a011)++(0.0,0.0) node {$011$};
\draw[left] (a101)++(0.0,0.0) node {$101$};
\draw[left] (a010)++(0.0,0.0) node {$010$};
\draw[above] (a022)++(0.0,0.0) node {$022$};
\draw[above] (a220)++(0.0,0.0) node {$220$};

\path (0+\deb,0) coordinate (b111);\path (0+\deb,3.6) coordinate (b000);
\path (1.2+\deb,2.4) coordinate (b001);\path (1.2+\deb,1.2) coordinate (b011);
\path (0+\deb,2.4) coordinate (b010);\path (0+\deb,1.2) coordinate (b101);
\path (-1.2+\deb,2.4) coordinate (b100);\path (-1.2+\deb,1.2) coordinate (b110);
\path (2.4+\deb,3.6) coordinate (b221);\path (1.2+\deb,4.8) coordinate (b220);
\path (-1.2+\deb,4.8) coordinate (b022);\path (-2.4+\deb,3.6) coordinate (b122);

\draw (b111) -- (b110) -- (b010) -- (b011) -- (b111);
\draw (b101) -- (b100) -- (b000) -- (b001) -- (b101);
\draw (b111) -- (b101);\draw (b110) -- (b100);
\draw (b010) -- (b000);\draw (b011) -- (b001);
\draw (b001) -- (b221) -- (b220) -- (b000);
\draw (b100) -- (b122) -- (b022) -- (b000);
\draw (b000)  [fill=black!20] circle (\vr);\draw (b100)  [fill=white] circle (\vr);
\draw (b001)  [fill=white] circle (\vr);\draw (b010)  [fill=white] circle (\vr);
\draw (b110)  [fill=white] circle (\vr);\draw (b101)  [fill=white] circle (\vr);
\draw (b111)  [fill=black] circle (\vr);\draw (b011)  [fill=white] circle (\vr);
\draw (b220)  [fill=white] circle (\vr);\draw (b221)  [fill=black] circle (\vr);
\draw (b022)  [fill=white] circle (\vr);\draw (b122)  [fill=black] circle (\vr);

\draw[above] (b000)++(0.0,0.13) node {$000000$};
\draw[right] (b111)++(0.0,0.0) node {$101010$};
\draw[left] (b110)++(0.0,0.0) node {$101000$};
\draw[left] (b100)++(0.0,0.0) node {$100000$};
\draw[left] (b122)++(0.0,0.0) node {$100100$};
\draw[right] (b221)++(0.0,0.0) node {$010010$};
\draw[right] (b001)++(0.0,0.0) node {$000010$};
\draw[right] (b011)++(0.0,0.0) node {$001010$};
\draw[below] (b101)++(0.0,-0.00) node {$100010$};
\draw[above] (b010)++(0.0,0.0) node {$001000$};
\draw[above] (b022)++(0.0,0.0) node {$000100$};
\draw[above] (b220)++(0.0,0.0) node {$010000$};

\draw(5,1) node {\huge{$\Rightarrow$}};
\draw(4.9,1.7) node {$\widehat{\Psi}$};
\draw (0,-1) node {$M_{3,2}$};
\draw (0+\deb,-1) node {$\Gamma_{3,2}^{\star}$};

\end{scope}
\end{tikzpicture}
\end{center}
\caption{$\widehat{\Psi}(M_{3,2})=\Gamma_{3,2}^{\star}$, in black vertices in $X$ of $\Gamma_{3,2}^{\star}$ as the daisy cube $Q_{6(X)}$.
\label{fig:isom2}}
\end{figure}

Fibonacci cubes~\cite[Theorem~10]{K-2005}, Pell graphs~\cite{M-2019} and generalized Pell graphs~\cite{IKT-2023} are median graphs.
Munarini's proof that Pell graphs are median graphs can be extended to Munarini graphs as a consequence of Theorem~\ref{th:isoembed}. 
\begin{theorem}
If $n\geq1$ and $k\geq1$, then $M_{n,k}$ is a median graph.
\end{theorem}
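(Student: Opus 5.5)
By Theorem~\ref{th:isoembed} it suffices to show that $\Gamma_{n,k}^\star$ is a median graph. The plan is to prove two things about $\Gamma_{n,k}^\star$ inside $Q_{kn}$: it is an isometric subgraph of the hypercube, and its vertex set $\Fib_{n,k}^\star$ is closed under the coordinatewise majority rule. A connected isometric subgraph of $Q_{kn}$ with that closure property is a median graph. Indeed, for $u,v,w\in\Fib_{n,k}^\star$ the hypercube median $m$ lies in $\Fib_{n,k}^\star$. Since distances agree with Hamming distances, $m$ is a median of $u,v,w$ in $\Gamma_{n,k}^\star$. Any other median in $\Gamma_{n,k}^\star$ would also be a median in $Q_{kn}$, hence equal to $m$ by uniqueness in hypercubes.

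\textbf{Isometry.} Write a binary string of length $kn$ as $n$ consecutive blocks of length $k$. A string lies in $\Fib_{n,k}^\star$ exactly when three conditions hold: every block has weight at most $1$; the last block is not $A_k$; and every block $A_k$ is followed by the block $A_0$. I will show that for $u,v\in\Fib_{n,k}^\star$ there is a path in $\Gamma_{n,k}^\star$ of length equal to their Hamming distance. The argument is by induction on the distance. Let $u\neq v$ and assume without loss of generality that $u$ has a $1$ in some position where $v$ has a $0$; otherwise swap $u$ and $v$, which is possible since $u\neq v$. Turning any $1$ of $u$ into $0$ keeps the first two conditions. It violates the third only if we clear the $1$ of a block $A_0$ that follows $A_k$; but that block is already $A_0$ and has no $1$. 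Turning off a $1$ of an $A_k$ block leaves $A_0A_0$, which is allowed. So clearing any $1$ of $u$ in a position where $v$ is $0$ yields a string $u'\in\Fib_{n,k}^\star$ with $d(u',v)=d(u,v)-1$, and the induction applies.

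\textbf{Majority closure.} Let $m$ be the majority of $u,v,w$. Within a block, two distinct positions cannot both carry a majority $1$: that would need four $1$s among three strings, each of weight at most $1$ in the block. So every block of $m$ has weight at most $1$. If block $j$ of $m$ is $A_k$, then at least two of $u,v,w$ have $A_k$ in block $j$. Those two strings have $A_0$ in block $j+1$, so $m$ has $A_0$ there. The same pair shows that $j$ is not the last block, since the last block of a Munarini string is never $A_k$. Hence $m\in\Fib_{n,k}^\star$.

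The main thing to check is the block decomposition itself: the monoid description with the constraint ``every $A_k$ is followed by $A_0$'' must be exactly the local block-wise characterization. This holds by the unique factorization already used in Theorem~\ref{th:isoembed}. Connectivity is covered by the isometry argument, since it already gives a path between any two vertices.
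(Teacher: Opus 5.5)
Your proof is correct, and its core matches the paper's. Both arguments split strings of $\Fib_{n,k}^\star$ into $k$-blocks and note that the blockwise majority of three blocks of weight at most $1$ again has weight at most $1$. Both then use that a majority $A_k$ in block $j$ forces two of the three strings to have $A_0$ in block $j+1$.

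The difference is in how median-closure becomes the median property. The paper cites Mulder's theorem that a median-closed induced subgraph of a hypercube is a median graph. You instead prove directly that $\Gamma_{n,k}^\star$ is isometric in $Q_{kn}$, using that $\Fib_{n,k}^\star$ is closed under turning a $1$ into a $0$. You then check by hand that the hypercube median is the unique median inside the subgraph.

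Your route is self-contained and makes connectivity explicit. The form of Mulder's result quoted in the paper tacitly needs connectivity: the median-closed set $\{00,11\}\subset B^2$ induces a disconnected graph. Your down-closure observation is also exactly the fact behind the paper's next theorem, that $\Gamma_{n,k}^\star$ is a daisy cube. The paper's version is shorter but relies on the external result.
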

\begin{proof}
By a result by Mulder~\cite{M-1978}, an induced subgraph $G$ of an hypercube, such that, for any three vertices of $G $ their median in the hypercube belongs to $G$, is a median graph. Let us then consider three vertices $u,v,w$ of $\Gamma_{n,k}^\star$. Write $u=A_{u_1}A_{u_2}\ldots A_{u_n}$, $v=A_{v_1}A_{v_2}\ldots A_{v _n}$ and $w=A_{w_1}A_{w_2}\ldots A_{w _n}$ as concatenation of strings in $\{A_0,A_1,\ldots,A_k\}$. The median $m_{u,v,w}$ in $Q_{kn}$ of this triplet is obtained using the majority rule, which is compatible with concatenation. Thus $m_{u,v,w}=m_1m_2\ldots m_n $ where $m_i$ is the median in $Q_k$ of the triple $(A_{u_i},A_{v_i},A_{w_i})$. It follows directly from the definition of the median in hypercubes that this median is, for example, $A_{u_i}$  if $u_i=v_i$, and $A_0$ if $A_{u_i},A_{v_i},A_{w_i}$ are all different. Therefore $m_{u,v,w}$ belongs to  the monoid generated by $\A=\{A_0,A_1,\ldots,A_k\}$.
Furthermore if $u_i=v_i=k$ then, since $u$ and $v$ are Munarini strings, $u_{i+1}=v_{i+1}=0$ therefore $m_{i+1}=A_0$. In conclusion  $m_{u,v,w}\in \Fib_{n,k}^\star$,  and $\Gamma_{n,k}^\star$ is a median closed subgraph of $Q_{kn}$ therefore it is a median graph.
\end{proof}
\qed

Fibonacci cubes and many variations of them like Lucas cubes, alternate Lucas-cube~\cite{ESS-2021e} and  Pell graph~\cite[Theorem 9.68]{EKM-2023} are examples of daisy cubes.

As Taranenko noted in~\cite[Corollary 2.3]{T-2020a}, if $G$ is a daisy cube then the vertex labeled $0^m$ in a proper embedding in $Q_m$ is a vertex of degree $\Delta(G)$. Furthermore, by  definition, a vertex of degree 1 in a daisy cube must be adjacent to the vertex labeled $0^m$ in a proper embedding.  It is therefore immediate that $\Pi_{2,3}$ is not a daisy cube (see Figure~\ref{fig:Pi23M23}). More generally, for $n\geq2$ and $k\geq3$, $\Pi_{n,k}$ is not a daisy cube (See our Corollary~\ref{co:notdaisycube} in Section~\ref{sec:cub}). On the other hand we will prove  that Munarini graphs are always daisy cubes and a proper embedding is given by  $\widehat{\Psi}$.

Let $\A_{n,k}$ be the set of strings in $\Fib_{n,k}$ without $0$. For example $\A_{3,2}=\{122,221,111\}$ (see Figure~\ref{fig:isom2}).

\begin{theorem}
If $n\geq 1$ and $k\geq1$, then $M_{n,k}$ is a daisy cube. Furthermore for $k\geq2$ $\widehat{\Psi}$ is a proper embedding of $M_{n,k}$ in $Q_{kn}$ and $\Gamma_{n,k}^\star$ is the daisy cube generated by  $$X=\{\widehat{\Psi}(u)\st u\in \A_{n,k}\}\,.$$ The maximal vertices of this daisy cube are the strings of $X$, thus the images by $\widehat{\Psi}$ of the vertices of $M_{n,k}$ without $0$s.
\end{theorem}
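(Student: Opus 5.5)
We prove the statement in three parts. First we show that $\Fib_{n,k}^\star$ is downward closed and generated by $X$. Then we show that the embedding is isometric. Finally we identify the maximal elements. The case $k=1$ is handled separately: $M_{n,1}\cong\Gamma_{n-1}$ by Proposition~\ref{prop:eq}, and Fibonacci cubes are known to be daisy cubes (with $0^{n-1}$ at the bottom). So from now on we assume $k\geq 2$ and work with $\Gamma_{n,k}^\star\subseteq Q_{kn}$, which is isomorphic to $M_{n,k}$ by Theorem~\ref{th:isoembed}.

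\emph{Downward closure.} Take $x=\widehat{\Psi}(u)\in\Fib_{n,k}^\star$ and a binary string $y\le x$. Cut $x$ into blocks of length $k$. Each block of $x$ is some $A_i$, and the corresponding block of $y$ lies below it, so it is either $A_i$ or $A_0$. The only constraint on Munarini strings is that every $A_k$ is followed by $A_0$. Lowering blocks can only remove occurrences of $A_k$; it never creates one, and it never changes a block that follows an $A_k$, since that block is already $A_0$. Hence $y\in\Fib_{n,k}^\star$. In terms of $M_{n,k}$, this says that replacing letters by $0$ keeps us inside $\Fib_{n,k}$, provided each $kk$ is replaced as a whole by $00$. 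It follows that $\Fib_{n,k}^\star=\{y\st y\le x \text{ for some } x\in \Fib_{n,k}^\star\}$. Since $\Gamma_{n,k}^\star$ is an induced subgraph of $Q_{kn}$, it is the daisy cube $Q_{kn}(\Fib_{n,k}^\star)$.

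\emph{Isometry.} A daisy cube $Q_m(X)$, taken as an induced subgraph of $Q_m$, is isometric in $Q_m$. To see this, go from $u$ to $v$ by first turning off, one at a time, the coordinates where $u$ has a $1$ and $v$ has a $0$, and then turning on the coordinates where $v$ has a $1$ and $u$ has a $0$. Every intermediate string is below $u$ or below $v$, so it lies in the daisy cube, and the path has length equal to the Hamming distance. (This is also stated in~\cite{KM-2019a}.) Therefore $\widehat{\Psi}$ is an isometric embedding, and by definition it is a proper embedding.

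\emph{Maximal elements.} We show that every $x\in\Fib_{n,k}^\star$ lies below some element of $X$, and that $X$ is an antichain. Write $u=\widehat{\Psi}^{-1}(x)$ as a word over $\{0,1,\ldots,k-1,kk\}$. Replace each letter $0$ by some nonzero letter to get $u'\in\A_{n,k}$. For a single $0$ we may use the letter $1$, which exists because $k\geq 2$. This raises the block $A_0$ to $A_1$, so $x\le\widehat{\Psi}(u')$. This is exactly where the hypothesis $k\ge2$ is needed. For $k=1$ the only available replacement is $00\to 11$, and it fails for an isolated $0$, such as the final one, which is consistent with treating $k=1$ separately.

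For the antichain property, take $u'\ne u''$ in $\A_{n,k}$ and suppose $\widehat{\Psi}(u')\le\widehat{\Psi}(u'')$. Compare the two strings block by block from the left. Consider the first position where they differ. If $u'$ has a block $A_i$ there with $1\le i\le k-1$, then $u''$ must have $A_i$ or $A_0$ there; it has no $0$ letter, and it differs from $u'$, so this is impossible. If $u'$ has the pair $A_kA_0$ there, then $u''$ must have $A_k$ at that block, so it also reads the letter $kk$, which contradicts the choice of position. Hence $X$ is an antichain and every element is below some element of $X$. So $X=\widehat{X}$ is exactly the set of maximal vertices, and $\Gamma_{n,k}^\star=Q_{kn}(X)$.

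The step I expect to need the most care is the block-by-block bookkeeping around the two-block letter $A_kA_0$. In the antichain argument, the block $A_0$ that ends $A_kA_0$ must not be confused with a letter $0$. I will handle this by always parsing strings left to right into letters of the alphabet.
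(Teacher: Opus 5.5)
Your proposal is correct and follows essentially the same route as the paper. You raise each letter $0$ to $1$ (using $k\ge 2$) to put every Munarini string below an element of $X$. You note that lowering bits preserves the block structure, since $A_kA_0$ can only drop to $A_0A_0$. You then use that daisy cubes are isometric, and compare blocks from the left to show $X$ is an antichain. The only differences are that you prove downward closure of all of $\Fib_{n,k}^\star$ and the isometry of daisy cubes directly, where the paper cites~\cite{KM-2019a}.

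One small slip: in your first antichain case, $\widehat{\Psi}(u')\le\widehat{\Psi}(u'')$ forces the block of $\widehat{\Psi}(u'')$ to dominate $A_i$, hence to be exactly $A_i$, not ``$A_i$ or $A_0$''. This only makes that case easier.
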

\begin{proof}
Since $\Gamma_{n-1}$ is a daisy cube we can assume $k\geq2$. Let $K=\{v\in B^{kn}\st v\le x\ {\rm for\ some}\ x\in X \}$ and let $u$ be a string in $\Fib_{n,k}$. If there exists an occurrence 
of $0$ in the $(k+1)$-ary string $u$, then $u=x0y$ where $x$ and $y$ are 
generalized Pell strings.  Note that $u_1=x1y$ also belongs to $\Fib_{n,k}$ since $k\geq2$. 
Then $\widehat{\Psi}(u)=\widehat{\Psi}(x)0^k\widehat{\Psi}(y)$ and $\widehat{\Psi}(u_1)=\widehat{\Psi}(x)10^{k-1}\widehat{\Psi}(y)$. Thus we obtain a string with one fewer occurrence of $0$ with $\widehat{\Psi}(u)\leq\widehat{\Psi}(u_{1})$.

Repeating this process, we obtain a generalized Pell string $u_m$ without $0$, therefore in $\A_{n,k}$, such that 
$\widehat{\Psi}(u)\leq\widehat{\Psi}(u_m)$ consequently $\widehat{\Psi}(u) \in K$.  Since any string in $\Fib_{n,k}^*$ is the image of some string in $\Fib_{n,k}$ we deduce $\Fib_{n,k}^*\subset K$. 

Now consider $v \in K$ and let $x\in X$ be such that  $v\leq x$. Since $x=\widehat{\Psi}(u)$ for some $u\in \A_{n,k}$, $x$ belongs to the monoid generated by $\{A_1,A_2,\ldots,A_{k-1},A_kA_0\}$. For $i\in\{1,\ldots,k-1\}$ replacing a $1$ with a $0$  in $A_i$ gives $A_0$. Similarly changing a $1$ to $0$  in $A_kA_0$ gives $A_0A_0$. Since $v\leq x$, $v$ is obtained by replacing some occurrences of $1$ with $0$ in $x$. Therefore  $v$ is also a
string over $\{A_1,A_2,\ldots,A_{k-1},A_kA_0\}$, thus $v$ is a Munarini string. This establishes the reverse inclusion.
 
Since $\Fib_{n,k}^*=K$, the subgraph $\Gamma_{n,k}^\star$ of $Q_{kn}$ is the daisy cube generated by $X$. Since the daisy cube generated by some $X$ is a partial cube~\cite[Proposition 2.1]{KM-2019a} and $\widehat{\Psi}$ is a graph isomorphism, $\widehat{\Psi}$ is a proper embedding in $Q_{kn}$ and therefore $M_{n,k}$ is a daisy cube.

We have to prove that the elements of $X=\widehat{\Psi}(\A_{n,k})$ are maximal. 
Assume $x\leq y$ where $x \in \widehat{\Psi}(\A_{n,k})$ and $y \in K$. 
Let $z\in \widehat{\Psi}(\A_{n,k})$ such that $y \le z$. By transitivity $x \le z$. 
Since $x$ and $z$ are  words  over the alphabet $\{10^{k-1},\ldots,0^i10^{k-i-1},\ldots,0^{k-2}10,0^{k-1}10^k\}$  and $x\le z$, 
if $x$ starts $0^i10^{k-i-1}$ for some $i\in\{0,\ldots,k-1\}$, then so does $z$. 
Processing from left to right, we deduce $x=z$. 
Thus the elements of  $\widehat{\Psi}(\A_{n,k})$ are maximal and we are done.
\end {proof}
\qed

%%%%%%%%%%%%%%%%%%%%%%%%%%%%%%%%%
%%%%%%%%%%%%%%%%%%%%%%%%%%%%%%%%%%%%%%%%%%%%%%%%%%%%%%%%%%

\section{{Hypercubes as subgraphs of Munarini graphs}}
\label{sec:cub}
%%%%%%%%%%%%%%%%%%%%%%%%%%%%%%%%%%%%%%%%%%%%%%%%%%%%%%%%%%
%%%%%%%%%%%%%%%%%%%%%%%%%%%%%%%%%%%%%%%%%%%%%%%%%%%%%%%%%%

Since Munarini graphs are daisy cubes, their cube polynomial and maximal cube polynomial can be easily determined once the weight polynomial is known. This is the main purpose of this section.

For a graph $G$, let $c_p(G)$ $(p\ge 0)$ denote the number of induced 
subgraphs of $G$ isomorphic to  $Q_p$. The {\em cube polynomial} $C_G(x)$
of $G$  is the corresponding enumerator polynomial, that is,

\begin{equation}\label{eqn:defCG}
C_G(x) = \sum_{p\geq 0} c_p(G) x^p\,.
\end{equation}
This polynomial was introduced in~\cite{BKS-2003} and determined for Fibonacci and Lucas cubes in~\cite{KM-2012a} and afterwards for several of their variations~(see \cite[Chapter~9]{EKM-2023} for example and \cite{M-2025b}). 

Let $G$ be a subgraph of $Q_n$. A bivariate refinement of $C_{G}(x)$ is the {\em distance cube polynomial} (with respect to $0^n$)~\cite{KM-2019a}. This polynomial keeps track of the distance of the hypercubes to $0^n$.  We define the polynomial as follows

\begin{equation}\label{eqn:defCGq}
D_G(x,q) = \sum_{p\geq 0} c_{p,d}(G) x^pq^d 
\end{equation} where $c_{p,d}(G)$ is  the number of induced 
subgraphs of $G$ isomorphic to $Q_p$ whose bottom vertex is at distance $d$ from $0^n$.

A {\em maximal hypercube}  of a graph $G$ is an induced subgraph $H$, isomorphic to a hypercube, such that $H$ is not contained in a larger induced hypercube of $G$. For a given interconnection topology, it is important to characterize maximal hypercubes, for example from the point of view of embeddings.

Let $h_p(G)$ be the number of maximal hypercubes of dimension $p$ of $G$ and $H_{G}(x)$ the corresponding 
enumerator polynomial, that is, 

$$H_{G}(x) = \sum_{p\geq 0} h_p(G) x^p\,.$$

This polynomial was determined for Fibonacci and Lucas cubes~\cite{M-2012a} and Pell graphs~\cite{EKM-2023}.

The \emph{weight} of a binary string of length $n$ is the number of occurrences of  $1$; it is thus the Hamming distance to the string $0^n$. Since $M_{n,k}$ is isometrically embedded by $\widehat{\Psi}$ in  the  subgraph $\Gamma_{n,k}^*$ of $Q^{kn}$ we call, by extension, the weight $w(u)$ of a vertex of $M_{n,k}$ the weight of $\widehat{\Psi}(u)$. Since $\widehat{\Psi}(0^n)=0^{kn}$ we have
$$w(u)=w(\widehat{\Psi}(u))=d_{\Gamma_{n,k}^*}(\widehat{\Psi}(u),0^{kn})=d_{M_{n,k}}(u,0^n)\,.$$

Note that the weight of a vertex $u$ of $M_{n,k}$ can be expressed as $w(u)=\sum_{i=1}^{k-1}{|u|_i}+\frac{1}{2}|u|_{k}$.

The \emph{weight enumerator polynomial of a daisy cube} $G$ properly embedded in $Q^m$ is the counting polynomial $W_{G}(x)=\sum_{u\in V(G)}{x^{w(u)}}$ of the number of vertices of $G$ of a given weight. The weight enumerator polynomial of ${M_{n,k}}$ is therefore  
$$W_{M_{n,k}}(x)=\sum_{u\in V(M_{n,k})}{x^{w(u)}}=W_{\Gamma_{n,k}^*}(x)\,.$$

\begin{theorem}\label{th:geneW}
Let $k\geq1$. Then the generating function of $ W_{M_{n,k}}(x)$ is 
$$ \sum_{n\geq0}{ W_{M_{n,k}}(x)t^n}=\frac{1}{1-t-(k-1)x t -xt^2}\,.$$
\end{theorem}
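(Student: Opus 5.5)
The plan is to set up a linear recurrence for $W_n(x)=W_{M_{n,k}}(x)$ from the partition of the vertex set used for the recursive decomposition, and then convert it into a generating function.

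First, recall the weight formula $w(u)=\sum_{i=1}^{k-1}|u|_i+\frac12|u|_k$. Under $\widehat{\Psi}$ the letter $0$ becomes $A_0$ of weight $0$. Each letter $i\in\{1,\ldots,k-1\}$ becomes $A_i$ of weight $1$. The block $kk$ becomes $A_kA_0$ of weight $1$. The weight is therefore additive over the unique factorization of $u$ into letters of $\{0,1,\ldots,k-1,kk\}$. In other words, $w(au)=w(a)+w(u)$ for a letter $a$, where $w(0)=0$, $w(i)=1$ for $1\le i\le k-1$, and $w(kk)=1$.

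Next, use the partition, valid for $n\geq2$,
$${\cal F}_{n,k}= \dot{\bigcup}_{i= 0}^{k-1}{i{\cal F}_{n-1,k}}\dot{\cup} kk{\cal F}_{n-2,k}.$$
Summing $x^{w(u)}$ over each part gives
$$W_n(x)=W_{n-1}(x)+(k-1)xW_{n-1}(x)+xW_{n-2}(x).$$
The initial values are $W_0=1$, since ${\cal F}_{0,k}=\{\lambda\}$, and $W_1=1+(k-1)x$, since ${\cal F}_{1,k}=\{0,1,\ldots,k-1\}$. The case $k=1$ needs no special treatment: there $W_1=1$, and the letters are only $0$ and $11$, so the formula still holds.

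Finally, let $G(t)=\sum_{n\ge0}W_nt^n$. Multiply the recurrence by $t^n$ and sum over $n\ge2$. This gives
$$G-1-(1+(k-1)x)t=(1+(k-1)x)t\,(G-1)+xt^2G.$$
The constant terms cancel, since $-(1+(k-1)x)t$ appears on both sides. Hence $G\,(1-t-(k-1)xt-xt^2)=1$, which is the claimed formula.

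An equivalent route is to note that the vertex set is the free monoid on the alphabet $\{0,1,\ldots,k-1,kk\}$. Marking each letter by $t^{\text{length}}x^{\text{weight}}$ gives letter weights $t$, $(k-1)xt$ and $xt^2$. The generating function is then $1/(1-\text{sum})$ directly.

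There is no real obstacle here. The only point needing care is justifying the additivity of the weight over letters, including the convention that $kk$ contributes weight $1$ through $A_kA_0$. This follows at once from the definition of $\widehat{\Psi}$ as a monoid morphism.
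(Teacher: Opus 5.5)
Your proof is correct and takes essentially the same route as the paper. You partition ${\cal F}_{n,k}=\dot{\bigcup}_{i=0}^{k-1} i{\cal F}_{n-1,k}\,\dot{\cup}\, kk{\cal F}_{n-2,k}$, derive $W_{M_{n,k}}=(1+(k-1)x)W_{M_{n-1,k}}+xW_{M_{n-2,k}}$ with $W_{M_{0,k}}=1$ and $W_{M_{1,k}}=1+(k-1)x$, and solve for the generating function; your additivity argument through the monoid morphism $\widehat{\Psi}$ just makes explicit the paper's brief remark that $0^n$ lies in $00M_{n-2,k}$.
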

\begin{proof}
Let $n\geq 2$. From the recursive decomposition of $M_{n,k}$ (see Figure~\ref{fig:Recursive}), since $0^n$ belongs to $00M_{n-2}$, we deduce 
$$  W_{M_{n,k}}(x)=W_{M_{n-1,k}}(x)+(k-1) x W_{M_{n-1,k}}+x W_{M_{n-2,k}} \,.$$
Furthermore $W_{M_{0,k}}=1$ and  $W_{M_{1,k}}=1+(k-1)x$. The result from these relations.
\end{proof}
\qed

\begin{corollary}\label{co:ndbweight}
Let $k\geq 1$ and $0\leq d \leq n$, then the number of vertices of $M_{n,k}$ at distance $d$ from $0^n$ is 
\begin{equation}\label{eqn:ndbweight}
 \sum_{j= 0}^d{\binom{d}{j}\binom{n-j}{d}(k-1)^{d-j}}
\end{equation}
and therefore 
\begin{equation}\label{eqn:polweight}
 W_{M_{n,k}}(x)=\sum_{d=0}^{n}{\sum_{j= 0}^{min (d,n-d)}{\binom{d}{j}\binom{n-j}{d}(k-1)^{d-j}}x^d}\,.
\end{equation}
\end{corollary}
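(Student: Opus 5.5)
The plan is to extract the coefficient of $x^dt^n$ from the generating function of Theorem~\ref{th:geneW}. Writing $1-t-(k-1)xt-xt^2=1-t\bigl(1+(k-1)x+xt\bigr)$, we expand
$$\frac{1}{1-t-(k-1)xt-xt^2}=\sum_{m\geq0}t^m\bigl(1+(k-1)x+xt\bigr)^m .$$
By the multinomial theorem, a term of $(1+(k-1)x+xt)^m$ is obtained by choosing $a$ factors equal to $1$, $b$ factors equal to $(k-1)x$ and $j$ factors equal to $xt$, with $a+b+j=m$. It contributes $\frac{m!}{a!\,b!\,j!}(k-1)^b x^{b+j}t^{m+j}$.

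Next we fix the exponents. We need $b+j=d$ and $m+j=n$, so $m=n-j$, $b=d-j$ and $a=n-j-d+j-j=n-d-j$. The coefficient of $x^dt^n$ is therefore
$$\sum_{j}\frac{(n-j)!}{(n-d-j)!\,(d-j)!\,j!}(k-1)^{d-j}.$$
To reach the form in the statement, rewrite the multinomial as
$$\frac{(n-j)!}{d!\,(n-j-d)!}\cdot\frac{d!}{j!\,(d-j)!}=\binom{n-j}{d}\binom{d}{j}.$$
This gives (\ref{eqn:ndbweight}). The summation range is $0\le j\le d$, and the terms with $j>n-d$ vanish because then $\binom{n-j}{d}=0$. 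So the same sum may be written with upper limit $\min(d,n-d)$, as in (\ref{eqn:polweight}).

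Finally, since $w(u)=d_{M_{n,k}}(u,0^n)$, the coefficient of $x^d$ in $W_{M_{n,k}}(x)$ is exactly the number of vertices at distance $d$ from $0^n$, and summing over $d$ gives (\ref{eqn:polweight}). The degree bound $d\le n$ is automatic, since $w(u)\le n$.

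There is no real obstacle here. The only care needed is the bookkeeping of the three multinomial indices and the convention that binomial coefficients vanish out of range. When $k=1$, the factor $(k-1)^{d-j}$ with $0^0=1$ keeps only $j=d$, giving $\binom{n-d}{d}$, which agrees with the known count for $\Gamma_{n-1}$ and serves as a sanity check.
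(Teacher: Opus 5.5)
Your proof is correct and follows essentially the same route as the paper: both extract the coefficient of $x^dt^n$ from the generating function of Theorem~\ref{th:geneW}. The paper groups the denominator as $t+xt(k-1+t)$ and applies two nested binomial expansions, while you apply a single trinomial expansion of $t^m(1+(k-1)x+xt)^m$; your index bookkeeping, the factorization into $\binom{n-j}{d}\binom{d}{j}$, and the justification of the upper limit $\min(d,n-d)$ are all sound.
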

\begin{proof}
\begin{align*}
\sum_{n\geq0}{ W_{M_{n,k}}(x)t^n}&=\frac{1}{1-t-(k-1)x t -xt^2}=\sum_{a\geq0}{(t+x t (k-1+t))^a}\\
       & = \sum_{a\geq0}{\sum_{b=0}^{a}{\binom{a}{b} (k-1+t)^b t^a x^b}}\\
			& = \sum_{a\geq0}{\sum_{b=0}^{a}{\sum_{j=0}^{b}{\binom{a}{b}\binom{b}{j} (k-1)^{b-j} t^{a+j} x^b}}}\,.
\end{align*}
A summand contributes to $x^dt^n$ if and only if $b=d$ and $a=n-j$.

Note that the sum can be restricted to $j\leq min(d,n-d)$ and thus (\ref{eqn:polweight}) follows.

\end{proof}
\qed
 
For $k=1$ since $0^{d-j}=0$ except for $d=j$, equality (\ref{eqn:polweight}) reduces to $W_{M_{n,1}}(x)=\sum_{d=0}^{n}\binom{n-d}{d}x^d$ the result known for $\Gamma_{n-1}$.

Corollary~\ref{co:ndbweight} also admits a combinatorial proof. Consider a vertex $v$ of $M_{n,k}$ with weight $d$ containing $j\leq d$ occurrences of $kk$. Replace each $kk$ with the symbol $\star$.  We obtain, in a bijective way, a word $\widehat{v}$ of length $n-j$ on the alphabet $\{0,1,\ldots,k-1,\star\}$ with $d$ characters belonging to $\{1,\ldots,k-1,\star\}$. After removing the $n-j-d$ $0$s we obtain a word of length $d$  over the alphabet $\{1,\ldots,k-1,\star\}$ with $j$ occurrences of $\star$. There exist $\binom{d}{j}(k-1)^{d-j}$ words of this type.
Conversely, from such a word $w$, there are $\binom{n-j}{n-j-d}=\binom{n-j}{d}$ ways to insert ${n-j-d}$ $0$s and thus  $w$ can be obtained from $\binom{n-j}{d}$ different $\widehat{v}$, themselves images of  different vertices $v$.

The following result is proved in ~\cite{KM-2019a}.

\begin{theorem}
\label{th:DfromCandCfromW}
If $G$ is a daisy cube, then $D_{G,0^n} (x,y)= C_{G}(x+y-1)$, $C_{G}(x)=W_{G}(x+1)$ and thus $ C_{G}(-1)=1$.  
\end{theorem}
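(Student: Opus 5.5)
The plan is to reduce everything to counting pairs $(t,b)$ with $b\le t$. Fix a proper embedding of $G$ in $Q_n$, so that $V(G)$ is a down-set of $(B^n,\le)$. By Proposition~\ref{pro:bt}, each induced hypercube $H$ of $G$ is determined by its top vertex $t=t(H)$ and its bottom vertex $b=b(H)$, with $b\le t$ and $\dim H=w(t)-w(b)$. Conversely, take any $t\in V(G)$ and any $b\le t$. The subcube of $Q_n$ spanned by the interval $I_{Q_n}(b,t)$ lies inside $I_{Q_n}(0^n,t)\subseteq V(G)$, and it is an induced subgraph of $G$ because $G$ is induced in $Q_n$; this is exactly Remark~\ref{re:daisy}. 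So the induced hypercubes of $G$ are in bijection with the pairs $(t,b)$ such that $t\in V(G)$ and $b\le t$. The bottom vertex $b$ is at distance $w(b)$ from $0^n$.

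\textbf{Step 1.} Compute $D_{G}(x,y)$ by summing over the top vertex $t$. For a fixed $t$ of weight $w$, the vertices $b\le t$ correspond to the subsets of the $w$ coordinates where $t$ has a 1. Write $S$ for the support of $b$, so $|S|=w(b)$. The cube with this $b$ and $t$ has dimension $w-|S|$ and bottom-distance $|S|$. Its contribution is therefore
$$\sum_{S}x^{w-|S|}y^{|S|}=(x+y)^{w}.$$
Summing over $t\in V(G)$ gives $D_G(x,y)=\sum_{t\in V(G)}(x+y)^{w(t)}=W_G(x+y)$.

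\textbf{Step 2.} Setting $y=1$ forgets the distance, so $C_G(x)=D_G(x,1)=W_G(x+1)$. Consequently $D_G(x,y)=W_G(x+y)=C_G(x+y-1)$. Finally, $C_G(-1)=W_G(0)$, which counts the vertices of weight $0$. The only such vertex is $0^n$, and it belongs to $G$ since $V(G)$ is a nonempty down-set. Hence $C_G(-1)=1$.

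The only delicate point is the bijection in the opening paragraph. Injectivity follows from Proposition~\ref{pro:bt}. Surjectivity onto all pairs $b\le t$ uses the daisy property: this is where a general partial cube would fail, and where Remark~\ref{re:daisy} is needed. One should also check that the subcube is induced in $G$ and not merely a subgraph of it, and this holds because $G$ is an induced subgraph of $Q_n$.
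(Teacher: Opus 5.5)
Your proof is correct and complete. The paper gives no argument of its own here and simply cites \cite{KM-2019a}. Your route is essentially the standard top-vertex counting behind that result: you identify the induced hypercubes of the daisy cube with the pairs $b\le t$, $t\in V(G)$, via Proposition~\ref{pro:bt} and Remark~\ref{re:daisy}, and sum $(x+y)^{w(t)}$ to get $D_G(x,y)=W_G(x+y)$. It is also the same counting the paper itself uses later for $|E(G)|=W'_G(1)$ and for the bijective description of the cube number.
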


Since Munarini graphs are daisy cubes, replacing $x$ by $x + 1$ in the generating function of the weight polynomial obtained in
Theorem~\ref{th:geneW} gives the generating function of the cube polynomial.

\begin{theorem}\label{th:geneC}
Let $k\geq1$, then the generating function of $C_{M_{n,k}}(x)$ is 
$$ \sum_{n\geq0}{ C_{M_{n,k}}(x)t^n}=\frac{1}{1-kt-(k-1)x t -(1+x)t^2}\,.$$
\end{theorem}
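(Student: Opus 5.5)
We combine the two ingredients already in place. Munarini graphs are daisy cubes, with proper embedding $\widehat{\Psi}$ in $Q_{kn}$ (for $k=1$ we use $M_{n,1}\cong\Gamma_{n-1}$, which is a daisy cube). By Theorem~\ref{th:DfromCandCfromW} we therefore have $C_{M_{n,k}}(x)=W_{M_{n,k}}(x+1)$ for every $n\geq 0$. The weight polynomial $W$ here is the one computed in Theorem~\ref{th:geneW}, namely with respect to the vertex $0^n$, whose image is $0^{kn}$. This is exactly the weight relevant to the proper embedding, so Theorem~\ref{th:DfromCandCfromW} applies to it. For $n=0$ the identity is trivial, since both sides equal $1$.

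We then substitute $x\mapsto x+1$ termwise in the generating function of Theorem~\ref{th:geneW}. Since each coefficient of $t^n$ is a polynomial, this is a formal operation:
$$\sum_{n\geq0}C_{M_{n,k}}(x)t^n=\sum_{n\geq0}W_{M_{n,k}}(x+1)t^n=\frac{1}{1-t-(k-1)(x+1)t-(x+1)t^2}.$$
Expanding the denominator gives $1-t-(k-1)t-(k-1)xt-(1+x)t^2=1-kt-(k-1)xt-(1+x)t^2$, which is the claimed form.

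There is no real obstacle. The only point to check is that the hypotheses of Theorem~\ref{th:DfromCandCfromW} hold, i.e.\ that the weight is measured from the vertex labelled $0$ in a proper embedding. As a cross-check, we can redo the argument recursively, as in the proof of Theorem~\ref{th:geneW}, via the decomposition in Figure~\ref{fig:Recursive}. This yields $C_{M_{n,k}}=(k+(k-1)x)C_{M_{n-1,k}}+(1+x)C_{M_{n-2,k}}$, with $C_{M_{0,k}}=1$ and $C_{M_{1,k}}=k+(k-1)x$ (the star $S_{k-1}$), which confirms the formula.
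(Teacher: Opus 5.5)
Your proposal is correct and follows the paper's own argument. Since $M_{n,k}$ is a daisy cube whose vertex $0^n$ is the zero vertex of a proper embedding, Theorem~\ref{th:DfromCandCfromW} gives $C_{M_{n,k}}(x)=W_{M_{n,k}}(x+1)$, and substituting $x\mapsto x+1$ in Theorem~\ref{th:geneW} yields the stated formula; your recursive cross-check $C_{M_{n,k}}=(k+(k-1)x)C_{M_{n-1,k}}+(1+x)C_{M_{n-2,k}}$ is consistent with this but not needed.
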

 
Note that the generating functions of $C_{M_{n,k}}(x)$ and $C_{\Pi_{n,k}}(x)$ (\cite[Proposition~5.2]{IKT-2023}))
are the same. Therefore, not only do $M_{n,k}$ and $\Pi_{n,k}$ have the same order and size, but they also have the same number of induced $Q_p$ for any $p$.

\begin{corollary}\label{co:notdaisycube}

Let $k\geq3$ and $n\geq2$, then $\Pi_{n,k}$ is not a daisy cube.
\end{corollary}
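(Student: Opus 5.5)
The plan is to use the local structure that a proper embedding forces around the root vertex $0^m$. This gives a contradiction that the counting invariants cannot give: by Theorem~\ref{th:geneC} and \cite[Proposition~5.2]{IKT-2023}, $\Pi_{n,k}$ and $M_{n,k}$ have the same cube polynomial, so $C_G(-1)=1$ and similar identities are useless here. Suppose $\Pi_{n,k}$ ($k\geq 3$, $n\geq 2$) has a proper embedding in some $Q_m$, and let $r$ be the vertex labelled $0^m$. I will use two consequences of the definition.

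First, by Taranenko's observation \cite[Corollary 2.3]{T-2020a}, $r$ has degree $\Delta(\Pi_{n,k})$.

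Second, for every vertex $v$ we have $I_{Q_m}(0^m,v)\subseteq V(G)$. Since the embedding is isometric, $I_G(r,v)$ is exactly the vertex set of the subcube $I_{Q_m}(0^m,v)$. Hence $|I_G(r,v)|=2^{d(r,v)}$.

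\textbf{Step 1 (locate $r$).} I compute degrees in $\Pi_{n,k}$. A letter $i\in\{1,\ldots,k-2\}$ (nonempty since $k\geq 3$) contributes two neighbours, $i\pm1$. A letter $0$ contributes one. A letter $k-1$ contributes one, plus at most one $(k-1)(k-1)\to kk$ move shared by two positions. A block $kk$ contributes one for two positions. Thus $\Delta=2n$, attained by strings in $\{1,\ldots,k-2\}^n$. I will check carefully that every other vertex has degree $<2n$: a $(k-1)$ run of length $\ell$ gives at most $\ell+\ell-1<2\ell$ neighbours. It follows that $r=r_1r_2\cdots r_n$ with all $r_i\in\{1,\ldots,k-2\}$.

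\textbf{Step 2 (choose $v$).} Let $v=kk\,r_3\cdots r_n$; this is a valid generalized Pell string since $n\geq 2$. Every geodesic from $r$ to $v$ keeps the suffix $r_3\cdots r_n$ fixed. This is because coordinates are independent except across the $(k-1)(k-1)\leftrightarrow kk$ move, and changing a later letter would cost extra steps. So $I(r,v)$ is isomorphic to the interval $I(r_1r_2,kk)$ in $\Pi_{2,k}$.

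\textbf{Step 3 (count).} In $\Pi_{2,k}$, the strings on $\{0,\ldots,k-1\}^2$ form the grid $P_k\cp P_k$, and $kk$ is a pendant vertex attached to $(k-1)(k-1)$. Set $a=k-1-r_1\geq1$ and $b=k-1-r_2\geq1$. Then $d(r_1r_2,kk)=a+b+1$, and the interval is the $(a+1)\times(b+1)$ subgrid from $r_1r_2$ to $(k-1)(k-1)$ together with $kk$. It therefore has $(a+1)(b+1)+1$ vertices. But
$$(a+1)(b+1)+1<2^{a}\,2^{b}\,2=2^{a+b+1},$$
since $a+1\leq 2^a$, $b+1\leq 2^b$, and $2^{a+b}\ge 4>1$ gives $(a+1)(b+1)+1\le 2^{a+b}+1<2^{a+b+1}$. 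This contradicts $|I(r,v)|=2^{d(r,v)}$.

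I expect the main obstacle to be Step~1: showing rigorously that the maximum degree $2n$ is attained only by strings over $\{1,\ldots,k-2\}$, which needs the bookkeeping around runs of $(k-1)$s and $k$s. A secondary point is justifying in Step~2 that geodesics do not touch the suffix. For that I would use the projection onto coordinates blocks, with distance in $\Pi_{n,k}$ additive over a factorization at positions not inside a $kk$ block. Alternatively, I could avoid Step~2 by invoking only the weaker fact that $I(r,v)$ must induce a hypercube; it contains the pendant vertex $kk$, while a hypercube of dimension $\geq 2$ has minimum degree $\geq 2$.
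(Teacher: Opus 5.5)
Your proof is correct, and it takes a genuinely different route from the paper's. The paper argues purely by counting. For a daisy cube $W_G(x)=C_G(x-1)$, so if $\Pi_{n,k}$ were a daisy cube it would have the same weight polynomial as $M_{n,k}$, since the two graphs have the same cube polynomial. The coefficient of $x$ in the weight polynomial is the degree of the root, so this gives $2n=\Delta(\Pi_{n,k})=kn-1$, which is impossible for $k\ge3$, $n\ge2$.

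This means your opening claim that counting invariants are useless here is mistaken. It is true that $C_G(-1)=1$ is shared by both graphs. But the linear coefficient of $C_G(x-1)$, namely $\sum_{p\ge1}(-1)^{p-1}p\,c_p(G)$, equals the root degree of any daisy cube, and that already separates the two graphs.

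Your route uses instead the local fact that every interval $I(r,v)$ from the root induces $Q_{d(r,v)}$. Compared with the paper, the trade-offs are as follows.
\begin{itemize}
\item It needs more about degrees than the paper does: the exact set of maximum-degree vertices, not just the value $\Delta=2n$. Your bookkeeping for this is right; one finds $\deg(w)=2n-|w|_0-\rho(w)-\tfrac32|w|_k$, where $\rho(w)$ is the number of runs of $(k-1)$s.
\item It does not rely on the equality $C_{\Pi_{n,k}}=C_{M_{n,k}}$, which comes from \cite{IKT-2023} and Theorem~\ref{th:geneC}.
\item It exhibits a concrete interval that is not a cube, extending the paper's pendant-vertex remark about $\Pi_{2,3}$.
\end{itemize}

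Step~2 is the only place where you argue heuristically, and closing it does not need full additivity of distances; a one-sided bound suffices. Send $w\in{\cal F}_{n,k}$ to the binary string $\phi(w)$ built position by position: for each position $j$, write $1^{i}0^{k-1-i}$ if $w_j=i\le k-1$, or $1^{k-1}$ if $w_j=k$, followed by one extra bit equal to $1$ iff a $kk$ block of $w$ starts at $j$.
\begin{itemize}
\item Each edge of $\Pi_{n,k}$ flips exactly one bit of $\phi$. Hence $d(u,w)\ge H(u,w)$, where $H(u,w)$ is the Hamming distance between $\phi(u)$ and $\phi(w)$.
\item Since $H(r,v)=a+b+1$ and you have an explicit path of that length, $d(r,v)=a+b+1$.
\item For $w\in I(r,v)$ we get $a+b+1=d(r,w)+d(w,v)\ge H(r,w)+H(w,v)\ge H(r,v)=a+b+1$. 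So $\phi(w)$ lies in the Hamming interval between $\phi(r)$ and $\phi(v)$.
\item This forces $w_j=r_j$ for $j\ge 3$, because $r_j\le k-2$ rules out $w_j=k$. Hence $w_1w_2$ is either $kk$ or lies in $\{r_1,\ldots,k-1\}\times\{r_2,\ldots,k-1\}$.
\item Therefore $|I(r,v)|\le(a+1)(b+1)+1<2^{a+b+1}$, which is all Step~3 needs.
\end{itemize}

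Your suggested shortcut through the pendant vertex does not bypass this estimate. For $n\ge3$ the vertex $v$ is not pendant in $\Pi_{n,k}$. You would still need the same bound to show that its neighbours $kk\,r_3\cdots(r_j\pm1)\cdots r_n$ are not in $I(r,v)$.
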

\begin{proof}
If $G$ is a daisy cube and $u$ is the vertex labeled $0^m$ in a proper embedding, then the coefficient of $x$ in the weight polynomial $W_G(x)$ is the number of neighbors of $u$, hence its degree. By  
~\cite[Corollary 2.3]{T-2020a} this degree is necessarily $\Delta(G)$. Assume that $\Pi_{n,k}$ is a daisy cube. Then, since $\Pi_{n,k}$ and $M_{n,k}$ have the same cube polynomial, by Theorem~\ref{th:DfromCandCfromW}, they necessarily have the same weight polynomial, hence they must have the same maximum degree.  It is proved in \cite[Corollary~5.5]{IKT-2023} that for $k\geq3$, $\Delta(\Pi_{n,k})=2n$. Since, in a string of $\Fib_{n,k}$, each $0$ can be changed to some $i\in\{1,\ldots,k-1\}$ and $00$ to $kk$ it is immediate that the degree of $0^n$ in $M_{n,k}$ is  $(k-1)n+n-1=kn-1$ for $n \geq1$ and  we obtain a contradiction. 
\end{proof}
\qed

A consequence of Corollary~\ref{co:ndbweight} and Theorem~\ref{th:DfromCandCfromW} is an expression of the coefficients of the cube polynomial, which is a generalization of the formula known for Pell graphs~\cite[Equation (14)]{M-2019}.

\begin{corollary}\label{co:ndQp}
Let $k\geq 1$, $n\geq 0$  and $p\leq n$, then the number of induced subgraphs  of $M_{n,k}$ isomorphic to $Q_p$ is
\begin{equation*}
 c_p(M_{n,k})=\sum_{d=p}^n{\sum_{j=0}^d{\binom{d}{p}{\binom{d}{j}\binom{n-j}{d}(k-1)^{d-j}}}}\,.
\end{equation*}
\end{corollary}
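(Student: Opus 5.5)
The plan is to combine Theorem~\ref{th:DfromCandCfromW} with Corollary~\ref{co:ndbweight}. Since $M_{n,k}$ is a daisy cube (properly embedded by $\widehat{\Psi}$ for $k\geq 2$, and via $\Gamma_{n-1}$ for $k=1$), Theorem~\ref{th:DfromCandCfromW} gives $C_{M_{n,k}}(x)=W_{M_{n,k}}(x+1)$. I will write $W_{M_{n,k}}(x)=\sum_{d=0}^n w_d x^d$, where
$$w_d=\sum_{j=0}^d\binom{d}{j}\binom{n-j}{d}(k-1)^{d-j}$$
by (\ref{eqn:ndbweight}).

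First I substitute $x+1$ and expand each power binomially: $(x+1)^d=\sum_{p=0}^d\binom{d}{p}x^p$. Then I exchange the order of summation to get
$$C_{M_{n,k}}(x)=\sum_{p=0}^n\Big(\sum_{d=p}^n\binom{d}{p}w_d\Big)x^p.$$
Reading off the coefficient of $x^p$ and inserting the expression for $w_d$ gives exactly the claimed double sum for $c_p(M_{n,k})$.

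As a sanity check, and as an alternative combinatorial proof, I would also give the direct argument. By Proposition~\ref{pro:bt} and Remark~\ref{re:daisy}, induced $Q_p$'s in a daisy cube correspond bijectively to pairs $(t,S)$, where $t$ is a vertex and $S$ is a $p$-subset of the $1$-coordinates of $\widehat{\Psi}(t)$. Hence $c_p=\sum_d \binom{d}{p}w_d$ directly.

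The only points needing care are the ranges of summation. Terms with $d>n$ or $d<p$ vanish: $\binom{n-j}{d}=0$ when $d>n-j$, and $\binom{d}{p}=0$ when $d<p$. For $k=1$ the convention $0^0=1$ must be used in $(k-1)^{d-j}$. I expect no real obstacle; this is a routine coefficient extraction.
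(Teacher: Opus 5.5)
Your proposal is correct and follows the paper's proof: substitute $x+1$ into the weight polynomial (\ref{eqn:polweight}) via Theorem~\ref{th:DfromCandCfromW}, expand $(1+x)^d$ binomially, and read off the coefficient of $x^p$, using $\binom{d}{p}=0$ for $d<p$. Your combinatorial check, counting pairs (top vertex, $p$-subset of its $1$-coordinates) via Proposition~\ref{pro:bt} and Remark~\ref{re:daisy}, is also valid and gives the same identity $c_p=\sum_d\binom{d}{p}w_d$ directly.
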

\begin{proof}
Replacing $x$ by $x+1$ in equation~(\ref{eqn:polweight}) we obtain the following expression of the cube polynomial
\begin{align*}\label{eqn:pol}
 C_{M_{n,k}}(x)&=\sum_{d=0}^{n}{\sum_{j= 0}^{d}{\binom{d}{j}\binom{n-j}{d}(k-1)^{d-j}}(1+x)^d}\\
&=\sum_{d=0}^{n}{\sum_{j= 0}^{d}{\binom{d}{j}\binom{n-j}{d}(k-1)^{d-j}}\sum_{p=0}^{d}{\binom{d}{p}}x^p}\,.
\end{align*}
Since, for $d<p$, $\binom{d}{p}=0$ the result follows. 

\end{proof}\qed

For $k=1$ then again $j=d$ and the expression reduces to $c_p(M_{n,1})=\sum_{d=p}^n\binom{d}{p}\binom{n-d}{d}$, the result for $\Gamma_{n-1}$\cite[Corollary 3.3]{KM-2012a}.

Another consequence of Theorem~\ref{th:DfromCandCfromW} for daisy cubes, and therefore  for Munarini graphs, is that the number of edges is the sum of the weights.
\begin{proposition}
If $G$ is a daisy cube with weight polynomial $W_G(x)$ then $|E(G)|= |W(G)|$ where $W(G)$ is the total weight number $W(G)=\sum_{u\in V(G)}w(u)=W'_G(1)$.
\end{proposition}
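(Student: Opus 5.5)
The plan is to read the edge count off the cube polynomial. By definition, $c_1(G)$ counts induced subgraphs isomorphic to $Q_1=K_2$. Every edge of a graph induces a copy of $K_2$, and every induced $K_2$ is an edge, so $|E(G)|=c_1(G)$, the coefficient of $x$ in $C_G(x)$. By Theorem~\ref{th:DfromCandCfromW}, a daisy cube $G$ satisfies $C_G(x)=W_G(x+1)$. Hence
\[
c_1(G)=\frac{d}{dx}C_G(x)\Big|_{x=0}=\frac{d}{dx}W_G(x+1)\Big|_{x=0}=W'_G(1).
\]

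It remains to identify $W'_G(1)$ with the total weight. This holds because
\[
W_G(x)=\sum_{u\in V(G)}x^{w(u)}
\quad\text{gives}\quad
W'_G(1)=\sum_{u\in V(G)}w(u)=W(G).
\]
Combining this with the previous step yields $|E(G)|=W(G)$.

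As an optional sanity check, there is also a direct combinatorial argument. In a proper embedding, orient each edge from its endpoint of smaller weight to its endpoint of larger weight; adjacent vertices in $Q_m$ differ in exactly one bit, so their weights differ by one. Each vertex $u$ then has exactly $w(u)$ edges going down. Indeed, every $v=u+\delta_i$ with $u_i=1$ satisfies $v\le u\le x$ for some $x\in X$, so $v$ is a vertex of $G$. Counting edges by their top endpoint gives $\sum_u w(u)$.

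I do not expect any real obstacle here. The only points to watch are that $c_1$ counts edges exactly, and that the weight is taken with respect to a proper embedding, which Theorem~\ref{th:DfromCandCfromW} presupposes.
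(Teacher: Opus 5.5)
Your proof is correct and follows the paper's own argument: take $|E(G)|=c_1(G)$ as the coefficient of $x$ in $C_G(x)=W_G(x+1)$, then differentiate at $0$ to obtain $W'_G(1)=\sum_{u}w(u)$. Your optional check, counting edges by their top endpoint, is also the combinatorial interpretation the paper gives.
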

\begin{proof}
$|E(G)|$ is the coefficient of $x$ in $Q_G(x)=W_G(x+1)$. This coefficient is $Q'_G(0)$. By derivation of a composite function $Q'_G(x)=W'_G(x+1)$ and the result follows.
Note that there is a combinatorial interpretation of this result. Indeed by counting the edges by their top vertex, a vertex of weight $d$, therefore at distance $d$ of $0^m$,  is the top vertex of exactly $d$ edges in $Q^m$ thus in $G$.
\end{proof}\qed

Another parameter of Pell graphs considered by Munarini is the cube number.
The \emph{cube number } $q(G)$ of a graph $G$ is the number of hypercubes in $G$. Therefore   $q(G)=\sum_{p\geq0}c_p(G)=C_G(1)$. Munarini proved \cite[Proposition~22]{M-2019} that the generating function of the sequence $(q(\Pi_n))_{n\geq 0}$ is $\sum_{n\geq0}{q(\Pi_n)t^n}=\frac{1}{1-3t-2t^2}$. Setting  $x=2$ in the generating function of $W_{M_{n,k}}(x)$ we deduce the generalization to Munarini graphs and generalized Pell graphs.
\begin{proposition}
Let $k\geq1$, then the generating function of the sequence $(q(M_{n,k}))_{n \geq 0}$ and $(q(\Pi_{n,k}))_{n \geq 0}$ are
$$ \sum_{n\geq0}{q(M_{n,k})t^n}=\sum_{n\geq0}{q(\Pi_{n,k})t^n}=\frac{1}{1-(2k-1)t -2t^2}\,.$$
\end{proposition}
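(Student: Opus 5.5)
Since $q(G)=\sum_{p\geq0}c_p(G)=C_G(1)$, the plan is to evaluate the generating functions already computed at $x=1$. By Theorem~\ref{th:DfromCandCfromW}, every daisy cube $G$ satisfies $C_G(x)=W_G(x+1)$, so $q(G)=C_G(1)=W_G(2)$. Munarini graphs are daisy cubes, so $q(M_{n,k})=W_{M_{n,k}}(2)$.

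Substituting $x=2$ in Theorem~\ref{th:geneW} gives
$$\sum_{n\geq0}q(M_{n,k})t^n=\frac{1}{1-t-2(k-1)t-2t^2}=\frac{1}{1-(2k-1)t-2t^2}.$$
Equivalently, substituting $x=1$ in Theorem~\ref{th:geneC} gives the denominator $1-kt-(k-1)t-2t^2$, which is the same. This substitution is legitimate termwise: each $W_{M_{n,k}}(x)$ is a polynomial, and the identity of Theorem~\ref{th:geneW} holds coefficientwise in $t$ as formal power series with polynomial coefficients in $x$.

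For generalized Pell graphs, there is no daisy structure available (by Corollary~\ref{co:notdaisycube} there is none for $k\geq3$). Instead I use the remark after Theorem~\ref{th:geneC}: the generating function of $C_{\Pi_{n,k}}(x)$ from \cite[Proposition~5.2]{IKT-2023} coincides with that of $C_{M_{n,k}}(x)$. Hence $C_{\Pi_{n,k}}(x)=C_{M_{n,k}}(x)$ for every $n$, and $q(\Pi_{n,k})=C_{\Pi_{n,k}}(1)=q(M_{n,k})$, giving the same generating function.

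The only subtle point is the case $k=1$, where $\Pi_{n,1}$ is not defined in \cite{IKT-2023}. There the statement should be read for $M_{n,1}$ only, or with $\Pi_{n,1}$ interpreted via the same cube polynomial. As a sanity check, $k=2$ gives $1/(1-3t-2t^2)$, which matches Munarini's result for $\Pi_n$.
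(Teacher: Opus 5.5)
Your proposal is correct and follows the paper's argument: for daisy cubes $q(G)=C_G(1)=W_G(2)$, so you substitute $x=2$ in Theorem~\ref{th:geneW}, and you transfer the result to $\Pi_{n,k}$ through the equality of cube polynomials with \cite[Proposition~5.2]{IKT-2023}. Your caveat about $k=1$ is fair; applying the adjacency rule of $\Pi_{n,k}$ literally at $k=1$ leaves only the swaps $00\leftrightarrow 11$, which is exactly the rule for $M_{n,1}$, so under that reading $\Pi_{n,1}=M_{n,1}$.
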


For $k=1$ we obtain that $q(\Gamma_n)=J_{n+2}$ where  $(J_n)_{n\geq0}$ is the Jacobsthal sequence~\cite[A001045]{Sloane}. The cube numbers of $\Pi_n$, $M_{n,3}$ and $M_{n,4}$ also appear in OEIS \cite{Sloane} under the references A007482, A015535 and A015555. This suggests the following combinatorial interpretation.

\begin{proposition} 
For $k\geq 1$, let $A_{n,k}$ be the set of words of length $n$ over the alphabet $\{0,1,\ldots,2k\}$ without runs of $0$s and $1$s of odd lengths.  Then the cube number of $\M_{n,k}$ is $|A_{n,k}|$.
\end{proposition}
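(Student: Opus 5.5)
The plan is to show that the numbers $|A_{n,k}|$ have the same generating function as the cube numbers. The preceding proposition (obtained by setting $x=1$ in Theorem~\ref{th:geneC}) already gives
$$\sum_{n\geq0}{q(M_{n,k})t^n}=\frac{1}{1-(2k-1)t-2t^2}\,,$$
so it suffices to prove $\sum_{n\geq0}|A_{n,k}|t^n=\frac{1}{1-(2k-1)t-2t^2}$.

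The first step is to describe $A_{n,k}$ as a free monoid, in the same spirit as the generalized Pell strings. I read ``without runs of $0$s and $1$s of odd length'' as: every maximal run of $0$s and every maximal run of $1$s has even length. I then claim that the words of $\bigcup_n A_{n,k}$ are exactly the elements of the monoid generated by the alphabet
$$\Sigma=\{2,3,\ldots,2k,\,00,\,11\}\,,$$
and that each such word factors uniquely over $\Sigma$.

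\begin{itemize}
\item Any concatenation of letters of $\Sigma$ has runs of $0$s and runs of $1$s of even length. A run of $0$s can only be built from consecutive copies of $00$, since the other letters contain no $0$; the same holds for $1$s.
\item Conversely, a word all of whose $0$-runs and $1$-runs are even is cut uniquely into letters. Each symbol in $\{2,\ldots,2k\}$ is its own letter, and each run $0^{2m}$ (resp.\ $1^{2m}$) is cut as $(00)^m$ (resp.\ $(11)^m$).
\end{itemize}

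Uniqueness of the factorization comes from reading left to right. A symbol $c\geq2$ forces the letter $c$. A symbol $0$ (resp.\ $1$) forces the letter $00$ (resp.\ $11$), because no other letter begins with that symbol.

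With this unique factorization, the count follows by the same decomposition used for ${\cal F}_{n,k}$:
$$A_{n+2,k}=\dot{\bigcup}_{c=2}^{2k} cA_{n+1,k}\ \dot{\cup}\ 00A_{n,k}\ \dot{\cup}\ 11A_{n,k}\,.$$
This gives $|A_{n+2,k}|=(2k-1)|A_{n+1,k}|+2|A_{n,k}|$, with initial values $|A_{0,k}|=1$ (the empty word) and $|A_{1,k}|=2k-1$. Equivalently, the generating function is $1/(1-(2k-1)t-2t^2)$, since the alphabet $\Sigma$ has $2k-1$ letters of length $1$ and $2$ letters of length $2$.

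It remains to compare initial values with the cube numbers: $q(M_{0,k})=1$ and $q(M_{1,k})=q(S_{k-1})=k+(k-1)=2k-1$. Both sequences satisfy the same recurrence with the same initial values, so they coincide, which proves the claim.

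The only genuine subtlety is the interpretation of the phrase ``runs of $0$s and $1$s''. The reading above is the one consistent with the generating function. The alternative reading, in which $0$s and $1$s are pooled into a single run, would give a different recurrence. A bijective proof through cubes, for example encoding each hypercube $(t(H),\supp(H))$ letter by letter, would be more illuminating, but it is not needed for the equality of cardinalities.
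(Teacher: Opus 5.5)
Your proof is correct and follows the paper's own argument. Both show $|A_{n,k}|=(2k-1)|A_{n-1,k}|+2|A_{n-2,k}|$ by splitting on the first letter ($c\in\{2,\ldots,2k\}$, $00$ or $11$), note that $q(M_{n,k})$ satisfies the same recurrence via the generating function $1/(1-(2k-1)t-2t^2)$, and match the initial values $1$ and $2k-1$. The paper additionally carries out the bijective proof (top vertex plus support) that you only mention as optional.
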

 \begin{proof}
Let $a_{n,k}=|A_{n,k}|$. For $n\geq 2$, a word in $A_{n,k}$ is obtained either by concatenating  a symbol $i\in \{2,\ldots,2k\}$  with a word in $A_{n-1,k}$ or by concatenating  $00$  or  $11$ with a word in $A_{n-2,k}$. Therefore 
$a_{n,k}=(2k-1)a_{n-1,k}+2a_{n-2,k}$.  From the generating function  $q(M_{n,k})$ follows the same relation. Since $a_{0,k}=1=q(M_{0,k})$,  $a_{1,k}=2k-1$ and $q(M_{1,k})=q(S_{k-1})=2k-1$ the sequences $(a_{nk})_{n\geq 0}$ and $(q(M_{n,k}))_{n \geq 0}$ are the same.

There exists a bijective proof. Consider the same order alphabet $\Sigma=\{0,1,\ldots,k\}\cup\{1',2',\ldots,k'\}$.  An induced hypercube  $H$ of $M_{n,k}$ is 
characterized by its top vertex $v$ and its support. Let $$I=\{i\in\{1,\ldots ,n\} \st v_i\in\{1,\ldots, k-1\}\}\cup \{i\in\{1,\ldots ,n-1\} \st v_iv_{i+1}=kk \}.$$ We can associate to the support of $H$ a subset $J$ of $I$. Since  $M_{n,k}$ is a daisy cube, to every subset of $I$ the corresponding hypercube with top vertex $v$ is a subgraph of $M_{n,k}$ by Remark~\ref{re:daisy}. Let's replace $v_i$ with $v_i'$ if $i\in J$ and $v_i\in\{1,\ldots,k-1\}$, and  $v_iv_{i+1}$ with $v_i'v_{i+1}'$ if $i\in J$ and $v_i=k$. This yields a word of length $n$ over the alphabet $\Sigma$ with no runs of $k$ or $k'$  of odd length. The set of these words is in bijection with $A_{n,k}$. The inverse operation is possible: from such a word, removing the primes give the top vertex and the positions of the primes define the set $J$ therefore the support. Since $M_{n,k}$ is a daisy cube, the corresponding hypercube is a subgraph of $M_{n,k}$. 
\end{proof}
\qed

Maximal hypercubes are easy to determine for daisy cubes.

\begin{proposition}\cite[Proposition 9.69]{EKM-2023}
\label{pro:daisymax}
Let $Q_n(X)$ be a daisy cube. The number of maximal hypercubes of dimension $k$ in $Q_n(X)$ is the number of maximal vertices of Hamming weight $k$. 
\end{proposition}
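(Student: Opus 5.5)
The plan is to set up a bijection between maximal induced hypercubes of $Q_n(X)$ and maximal vertices, sending a hypercube to its top vertex. Throughout I use that $Q_n(X)$ is the subgraph of $Q_n$ induced by the down-set $K=\{u\st u\le x \text{ for some } x\in X\}$, together with the characterization of induced hypercubes via bottom and top vertices from Proposition~\ref{pro:bt}.

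\textbf{Step 1: top vertex to hypercube.} For a maximal vertex $x\in\widehat{X}$, I will consider the hypercube $H_x$ of $Q_n$ with bottom vertex $0^n$ and top vertex $x$, i.e.\ the interval $I_{Q_n}(0^n,x)$. Its vertex set lies in $K$ by definition, so $H_x$ is an induced subgraph of $Q_n(X)$. Its dimension equals the Hamming weight of $x$.

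\textbf{Step 2: $H_x$ is maximal.} Suppose $H_x\subseteq H'$ for some induced hypercube $H'$ of $Q_n(X)$. Then $t(H')\ge x$, since $x\in V(H')$ and every vertex of an induced hypercube lies below its top vertex. Moreover $t(H')\in K$, so $t(H')\le y$ for some $y\in X$. Hence $x\le y$, and maximality of $x$ gives $x=y=t(H')$. Also $b(H')\le 0^n$ forces $b(H')=0^n$. By Proposition~\ref{pro:bt}, $H'$ is characterized by its bottom and top vertices, so $H'=H_x$.

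\textbf{Step 3: every maximal hypercube has this form.} Let $H$ be a maximal induced hypercube. Its top vertex $t=t(H)$ lies in $K$, so $t\le x$ for some $x\in\widehat{X}$. Every vertex of $H$ satisfies $b(H)\le v\le t\le x$, so $V(H)\subseteq I_{Q_n}(0^n,x)=V(H_x)$. Maximality then gives $H=H_x$.

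Finally, the map $x\mapsto H_x$ is injective because the top vertex recovers $x$. Combined with the weight–dimension correspondence of Step 1, this gives the count: the number of maximal hypercubes of dimension $k$ equals the number of maximal vertices of Hamming weight $k$.

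I expect the only subtle point to be in Step 2 and Step 3, namely the claim that an induced hypercube of $Q_n$ is exactly the interval between its bottom and top vertices, so that containment of hypercubes is governed by these two vertices. This follows from Proposition~\ref{pro:bt} and the support description given just before Remark~\ref{re:daisy}.
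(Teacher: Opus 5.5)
Your proof is correct, and it follows the route the paper relies on. The paper gives no proof of its own and simply cites Proposition~9.69 of the book, but it uses your picture when it notes that all maximal hypercubes of a daisy cube share the bottom vertex $0^n$. That picture is the bijection $x\mapsto\langle I_{Q_n}(0^n,x)\rangle$ from $\widehat{X}$ onto the maximal induced hypercubes, under which dimension equals Hamming weight. Two ingredients you use implicitly are standard: every $t\in K$ lies below some element of $\widehat{X}$, by finiteness of $X$; and an induced $Q_p$ in $Q_n$ is the interval $I_{Q_n}(b(H),t(H))$, since $t(H)$ is antipodal to $b(H)$ in $H$.
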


Let $k\geq2$. Maximal vertices in the daisy cube $M_{n,k}$ are the image by the proper  embedding $\widehat{\Psi}$ of strings of $\Fib_{n,k}$ without $0$s.  Let us recall that, for any vertex $u$ of $M_{n,k}$, we have the equality $d_{M_{n,k}}( u,0^n)=d (\widehat{\Psi}(u),0^{kn})=w(\widehat{\Psi}(u)) $. Therefore the number $h_p(M_{n,k})$ of maximal hypercubes of dimension $p$ is  the number of strings of $\Fib_{n,k}$ without $0$ at distance $p$ in $M_{n,k}$ of $0^n$.
For $n\geq2$ and $p\geq1$, from the decomposition of $M_{n,k}$, see Figure~\ref{fig:Recursive},  we infer that $h_p(M_{n,k})= (k-1) h_{p-1}(M_{n-1,k})+h_{p-1}(M_{n-2,k})$ because $0^n$ belongs to $00M_{n-2,k}$ and a string without $0$s cannot be in $0M_{n-1,k}$.

Therefore the family of polynomials  $H_{M_{n,k}}(x) = \sum_{p\geq 0} h_p(M_{n,k}) x^p$ satisfies for $n\geq 2$
\begin{equation}\label{eq:max}H_{M_{n,k}}(x)=x(k-1) H_{M_{n-1,k}}(x) + x H_{M_{n-2,k}}(x)\,.\end{equation}

Together with $H_{0,k}$=1 and $H_{1,k}=(k-1)x$ we deduce the following result.

\begin{theorem}\label{th:geneH}
Let $k\geq2$, then the generating function of the maximal cube polynomial $ H_{M_{n,k}}(x)$ is 
$$ \sum_{n\geq0}{ H_{M_{n,k}}(x)t^n}=\frac{1}{1-(k-1)x t -xt^2}\,.$$
\end{theorem}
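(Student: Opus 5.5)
The theorem follows by turning the recurrence (\ref{eq:max}) into a generating function, so the plan is the standard manipulation used for Theorem~\ref{th:geneW}. Write $H_n(x)$ for $H_{M_{n,k}}(x)$ and set $G(t)=\sum_{n\geq0}H_n(x)t^n$. The inputs, all established just above, are:
\begin{itemize}
\item the initial values $H_0=1$, since $M_{0,k}=K_1$ has only the $0$-dimensional maximal cube $\lambda$;
\item $H_1=(k-1)x$, since in the star $S_{k-1}$ the maximal cubes are the $k-1$ edges, which correspond to the maximal vertices $1,\ldots,k-1$ of weight $1$;
\item the relation $H_n=(k-1)xH_{n-1}+xH_{n-2}$ for $n\geq2$.
\end{itemize}

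First I would double-check the recurrence, since it is the only real content. By Proposition~\ref{pro:daisymax}, and because $\widehat{\Psi}$ is a proper embedding for $k\geq2$, $h_p(M_{n,k})$ counts the strings of $\A_{n,k}$ of weight $p$. Such a string does not begin with $0$. It therefore begins either with some $i\in\{1,\ldots,k-1\}$ followed by a string of $\A_{n-1,k}$, which contributes weight $1$, or with $kk$ followed by a string of $\A_{n-2,k}$, which also contributes weight $1$ because $w(kk)=1$. This gives exactly (\ref{eq:max}).

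Next, multiply the recurrence by $t^n$ and sum over $n\geq2$:
\[
G(t)-H_0-H_1t=(k-1)xt\,(G(t)-H_0)+xt^2G(t).
\]
Substituting $H_0=1$ and $H_1=(k-1)x$, the two terms $(k-1)xt$ cancel, leaving $G(t)\bigl(1-(k-1)xt-xt^2\bigr)=1$. This is the claimed formula.

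There is no genuine obstacle. The only points needing care are the boundary cases, namely that the $n=1$ initial value matches the recurrence's shift so the linear terms cancel, and the restriction $k\geq2$. For $k=1$ the strings without $0$ need not be the maximal vertices under $\widehat{\Psi}$, which is why the statement excludes that case.
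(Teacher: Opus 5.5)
Your proof is correct and follows essentially the paper's argument: both count zero-free strings by weight, using the proper embedding $\widehat{\Psi}$ and Proposition~\ref{pro:daisymax}. The paper reads the recurrence (\ref{eq:max}) off the recursive decomposition, noting that a zero-free string cannot lie in $0M_{n-1,k}$; your first-letter split into $i\in\{1,\ldots,k-1\}$ or $kk$, each of weight $1$, is the same observation, and with $H_0=1$ and $H_1=(k-1)x$ it gives the stated generating function.
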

\begin{corollary}\label{co:geneH}
Let $k\geq2$, then the number of maximal hypercubes of dimension $p$ in $M_{n,k}$ is
$$
h_p(M_{n,k})= (k-1)^{2p-n}\binom{p}{n-p}\,.
$$
\end{corollary}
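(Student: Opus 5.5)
Extracting the coefficient of $x^pt^n$ from the generating function of Theorem~\ref{th:geneH} does the job, in the same spirit as the proof of Corollary~\ref{co:ndbweight}. Writing
$$\frac{1}{1-(k-1)xt-xt^2}=\sum_{a\geq0}x^a t^a\,(k-1+t)^a,$$
we see that every term coming from the $a$-th power has $x$-degree exactly $a$. So the coefficient of $x^p$ is $t^p(k-1+t)^p$, and no other value of $a$ contributes.

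Next I expand by the binomial theorem:
$$t^p(k-1+t)^p=\sum_{j=0}^{p}\binom{p}{j}(k-1)^{p-j}t^{p+j}.$$
The coefficient of $t^n$ comes from the single index $j=n-p$. This gives $h_p(M_{n,k})=\binom{p}{n-p}(k-1)^{p-(n-p)}=(k-1)^{2p-n}\binom{p}{n-p}$. The convention is that the expression is $0$ when $n-p<0$ or $n-p>p$, since the binomial coefficient then vanishes. This matches the fact that a string of $\Fib_{n,k}$ without $0$s of weight $p$ must satisfy $n/2\leq p\leq n$.

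The only delicate point is the boundary behaviour of $(k-1)^{2p-n}$. When $k=2$ this is $1^{2p-n}=1$, and when $2p-n<0$ the binomial vanishes anyway, so no negative power actually appears. As a cross-check I would also give the direct combinatorial count. A zero-free string with weight $p$ and length $n$ has $j$ blocks $kk$ and $p-j$ single letters from $\{1,\ldots,k-1\}$. Its length is $2j+(p-j)=n$, so $j=n-p$. There are $\binom{p}{j}$ ways to arrange the blocks among the $p$ letters and $(k-1)^{p-j}$ ways to choose the single letters, which gives the same formula.
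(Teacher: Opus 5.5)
Your proof is correct and essentially the paper's: you extract the coefficient of $x^pt^n$ from the generating function of Theorem~\ref{th:geneH}. The paper expands $((k-1)xt+xt^2)^a$ directly, while you first factor out $x^at^a$, which is the same computation. Your combinatorial cross-check, with $n-p$ blocks $kk$ and $2p-n$ letters from $\{1,\ldots,k-1\}$, is exactly the combinatorial interpretation the paper gives.
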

\begin{proof}
Expanding the generating function, we obtain
$$\frac{1}{1-(k-1)x t -xt^2}=\sum_{a\geq0}{((k-1)x t+xt^2)^a}=\sum_{a\geq0}{\sum_{b=0}^a{\binom{a}{b}x^a t^{a+b}(k-1)^{a-b}}} \,.$$
A term in the sum contributes to $x^pt^n$ if and only if $a=p$ and $b=n-p$. The result follows.

We can give a combinatorial interpretation. The number of maximal hypercubes of dimension $p$ in $M_{n,k}$ is the number of vertices $u$ in $M_{n,k}$ at a distance $p$ of $0^n$ such that $u$ does not use the letter $0$. Such a string uses $a$ letters in $\{1,\ldots,k-1\}$ and $b$ letters $kk$  with $a+b=p$ and $a+2b=n$.
Therefore, $a=2p-n$ and $b=n-p$.  There exist $(k-1)^{a}$ words of length $a$ over $\{1,\ldots,k-1\}$. Given this word, there are $\binom{a+b}{b}$ ways to insert the $b$ letters $kk$ into this word.
\end{proof}
\qed

Note that $h_p(M_{n,k})=0$ for $p<\frac{n}{2}$. 

Maximal vertices in Fibonacci cubes have a different construction; thus, the expression of $h_p(M_{n,1})$ is different, $h_p(M_{n,1})=h_p(\Gamma_{n-1})=\binom{p+1}{n-2p}$~ \cite[Proposition~2.5]{M-2012a}. However Corollary~\ref{co:geneH} generalizes the expression known for Pell graphs~\cite[Theorem~9.70]{EKM-2023}.

The generating function of $ H_{M_{n,k}}(x)$ can be determined alternatively, using the recursive decomposition (see Figure~\ref{fig:Recursive}, right).  Indeed a maximal hypercube in $M_{n,k}$ can be
\begin{itemize}
\item a maximal hypercube in one of the $k-1$ subgraphs induced by $i\Fib_{n-1,k}\dot{\cup} 0\Fib_{n-1,k}$ for some $i\in\{1,\ldots,k-1\}$,
\item a maximal hypercube in the  subgraph induced by $kk\Fib_{n-2,k}\dot{\cup} 00\Fib_{n-2,k}$.
\end{itemize}
Since these subgraphs are respectively isomorphic to $M_{n-1,k}\cp K_1$ and $M_{n-2,k}\cp K_1$, we deduce that, for $n\geq 2 $ and $p \geq 1$,
\begin{equation}\label{eq:hp}
 h_p(M_{n,k})= (k-1)h_{p-1}(M_{n-1,k})+ h_{p-1}(M_{n-2,k})\,.
\end{equation}
The expression of the generating  function of $ H_{M_{n,k}}(x)$ is then immediate from  the initial values $H_{M_{1,k}}=H_{S_{k-1}}=(k-1)x$ and $H_{M_{0,k}}=1$.

The same idea can be used to determine  the generating function of the maximal cube polynomial  of generalized  Pell graphs, using their recursive decomposition (see Figure~\ref{fig:Recursive}, left). A maximum hypercube in $\Pi_{n,k}$ can be
\begin{itemize}
\item a maximum hypercube in one of the $k-1$ subgraphs induced by $i\Fib_{n-1,k}\dot{\cup}(i-1)\Fib_{n-1,k}$ for some $i\in\{1,\ldots,k-1\}$,
\item a maximum hypercube in the  subgraph induced by $kk\Fib_{n-2,k}\dot{\cup} (k-1)(k-1)\Fib_{n-2,k}$.
\end{itemize} 
Therefore, $h_p(\Pi_{n,k})$ follows a similar relation
\begin{equation}\label{eq:hpM}
 h_p(\Pi_{n,k})= (k-1)h_{p-1}(\Pi_{n-1,k})+ h_{p-1}(\Pi_{n-2,k})\,.
\end{equation}
Since $H_{\Pi_{1,k}}=H_{P_{k}}= (k-1)x= H_{M_{1,k}}$ we deduce the following result.
\begin{proposition}
Let $n\geq0$ and  $k\geq 1$,  then  $\Pi_{n,k}$ and  $M_{n,k}$ have the same maximal cube polynomial. 
\end{proposition}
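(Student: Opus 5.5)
Both families satisfy the same recurrence for maximal hypercubes, so the plan is to match their maximal cube polynomials by induction on $n$. We have the recurrence (\ref{eq:hp}) for $M_{n,k}$ and (\ref{eq:hpM}) for $\Pi_{n,k}$, both valid for $n\ge 2$ and $p\ge 1$. They translate into the same polynomial recurrence $H(x)=x(k-1)H_{n-1}(x)+xH_{n-2}(x)$, namely equation (\ref{eq:max}) for $M_{n,k}$ and its analogue for $\Pi_{n,k}$.

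\textbf{Step 1: initial values.} $M_{0,k}=\Pi_{0,k}=K_1$, so both polynomials equal $1$. $M_{1,k}=S_{k-1}$ and $\Pi_{1,k}=P_k$ are trees on $k$ vertices. Their maximal hypercubes are exactly their $k-1$ edges when $k\ge2$, so both polynomials equal $(k-1)x$.

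\textbf{Step 2: the case $p=0$.} The recurrences cover only $p\ge1$, so I must check that $h_0=0$ for $n\ge 1$ in both families. This holds because both graphs are connected with at least one edge when $k\ge2$ or $n\ge2$, so no vertex is a maximal $Q_0$. This matches the vanishing constant term of the polynomial recurrence.

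\textbf{Step 3: induction.} Induction on $n$ with the common recurrence then gives $H_{\Pi_{n,k}}=H_{M_{n,k}}$ for all $n$. For $k=1$, $\Pi_{n,1}$ is not defined in the original sense. I will handle it separately: I either take the construction of $\Pi_{n,1}$ to coincide with $M_{n,1}$ under the convention of \cite{IKT-2023}, or restrict the claim to $k\ge2$ as in Theorem~\ref{th:geneH}.

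\textbf{Main obstacle: justifying (\ref{eq:hpM}).} This is where the real work lies. I must show that every maximal hypercube $H$ of $\Pi_{n,k}$ lies in exactly one of the listed subgraphs and uses exactly one crossing direction. Those subgraphs are $\langle i\Fib_{n-1,k}\cup(i-1)\Fib_{n-1,k}\rangle\cong M\cp K_2$ and $\langle kk\Fib_{n-2,k}\cup(k-1)(k-1)\Fib_{n-2,k}\rangle$.

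The argument I would try goes as follows. Because the first-letter classes are joined along a path in the recursive decomposition, an induced hypercube can span at most two adjacent classes, since a $4$-cycle cannot use two different crossing types at the same position. If $H$ lies inside a single class, it extends by the matching edge to the neighbouring class, so it is not maximal. The only exception is when no such neighbour exists. For $n\ge2$ and $k\ge2$ every class has a neighbouring class, and $H\cong H'\cp K_2$ with $H'$ a hypercube of the smaller graph. Moreover, $H$ is maximal if and only if $H'$ is maximal in $\Pi_{n-1,k}$ (respectively $\Pi_{n-2,k}$), because the matching is a perfect correspondence within each listed subgraph. The only subtlety is the classes $(k-1)\Fib_{n-1,k}$, which contain $(k-1)(k-1)\Fib_{n-2,k}$ and therefore meet two kinds of crossing. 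This needs a short check that a hypercube cannot use both crossings simultaneously.
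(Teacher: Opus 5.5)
Your proposal is correct and is essentially the paper's proof. Both obtain the recurrence (\ref{eq:hpM}) for $\Pi_{n,k}$ from the decomposition into the $k-1$ subgraphs $\langle i\Fib_{n-1,k}\cup(i-1)\Fib_{n-1,k}\rangle$ and $\langle kk\Fib_{n-2,k}\cup(k-1)(k-1)\Fib_{n-2,k}\rangle$, note that it coincides with (\ref{eq:hp}) for $M_{n,k}$, and conclude from the common initial values $1$ and $(k-1)x$. Your proposal also supplies details the paper leaves implicit: that every maximal hypercube uses exactly one first-letter crossing (so $H\cong H'\cp K_2$, with $H$ maximal iff $H'$ is), the check of the $p=0$ term, and the remark that the argument needs $k\ge 2$. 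For $k=1$ the defining rules of $\Pi_{n,k}$, read literally, give exactly $M_{n,1}$, so that case is trivial.
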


It is important to note that even though $\Pi_{n,k}$ and $M_{n,k}$ have the same maximal cube polynomial, the bijection between maximal hypercubes does not seem obvious; it is not, for example, an extension of one of the bijections between the vertices of these graphs. Indeed, in the daisy cube $M_{2,3}$, the maximal hypercubes cannot be disjoint because they share the same bottom vertex, whereas in $\Pi_{2,3}$, the hypercubes induced by $\{11,01,00,10\}$ and $\{22,33\}$, although disjoint, are maximal (see Figure~\ref{fig:Pi23M23}, left).

\section{Conclusion}
\label{sec:asy}
In this paper, we introduce Munarini graphs, a generalization of Fibonacci cubes and Pell graphs, and study their first properties.These results show that Munarini graphs share several structural and enumerative properties with Fibonacci cubes and Pell graphs, further supporting their role as a natural generalization within the class of daisy cubes.

In a follow-up paper, we will determine the diameter, radius, and the number of vertices of a given degree or eccentricity, and conclude with some asymptotic results.

There are many other unanswered questions about Munarini graphs.  For instance, Fibonacci cubes appear as resonance graphs of a class of benzenoid hydrocarbons in mathematical chemistry~\cite{KZ-2005a}. More generally, the connection between resonance graphs and daisy cubes has also been explored; see for example~\cite{Z-2018}. Clarifying the role of Munarini graphs in this context would be interesting.
%
%%%%%%%%%%%%%%%%%%%%%%%%%%%%%%%%%%%%%%%%%%%%%%%%%%%%%%%%%%%
%%%%%%%%%%%%%%%%%%%%%%%%%%%%%%%%%%%%%%%%%%%%%%%%%%%%%%%%%%%
%
%\section{Degrees in Munarini graphs}
%\label{sec:deg}
%%%%%%%%%%%%%%%%%%%%%%%%%%%%%%%%%%%%%%%%%%%%%%%%%%%%%%%%%%%
%%%%%%%%%%%%%%%%%%%%%%%%%%%%%%
%
%%%%%%%%%%%%%%%%%%%%%%%%%%%%%%%%%%%%%%%%%%%%%%%%%%%%%%%%%%%
%%%%%%%%%%%%%%%%%%%%%%%%%%%%%%%%%%%%%%%%%%%%%%%%%%%%%%%%%%%
%
%\section{Eccentricity, diameter and radius}
%\label{sec:ecc}
%%%%%%%%%%%%%%%%%%%%%%%%%%%%%%%%%%%%%%%%%%%%%%%%%%%%%%%%%%%
%%%%%%%%%%%%%%%%%%%%%%%%%%%%%%%%%%%%%%%%%%%%%%%%%%%%%%%%%%%
%
%%%%%%%%%%%%%%%%%%%%%%%%%%%%%%%%%%%%%%%%%%%%%%%%%%%%%%%%%%%
%%%%%%%%%%%%%%%%%%%%%%%%%%%%%%%%%%%%%%%%%%%%%%%%%%%%%%%%%%%
%
%\section{Cube polynomial of Fibonacci $p$-cubes}
%\label{sec:cubepolynomial}
%%%%%%%%%%%%%%%%%%%%%%%%%%%%%%%%%%%%%%%%%%%%%%%%%%%%%%%%%%%
%%%%%%%%%%%%%%%%%%%%%%%%%%%%%%%%%%%%%%%%%%%%%%%%%%%%%%%%%%%
%
%
%%%%%%%%%%%%%%%%%%%%%%%%%%%%%%%%%%%%%%%%%%%%%%%%%%%%%%%%%%%
%%%%%%%%%%%%%%%%%%%%%%%%%%%%%%%%%%%%%%%%%%%%%%%%%%%%%%%%%%%
%
%\section{Asymptotic properties}
%\label{sec:asy}
%%%%%%%%%%%%%%%%%%%%%%%%%%%%%%%%%%%%%%%%%%%%%%%%%%%%%%%%%%%
%%%%%%%%%%%%%%%%%%%%%%%%%%%%%%%%%%%%%%%%%%%%%%%%%%%%%%%%%%%
%%
%%
%
%
\bibliographystyle{plain}  

\bibliography{fibo-bib4}

\end{document}